\newcommand{\addresseshere}{%
  \enddoc@text\let\enddoc@text\relax
}
\newtheorem{theorem}{Theorem}
\newtheorem{remark}[theorem]{Remark}
\newtheorem{lemma}[theorem]{Lemma}
\newtheorem{proposition}[theorem]{Proposition}
\newtheorem*{proposition*}{Proposition}
\newtheorem{corollary}[theorem]{Corollary}
\newcommand{\bv}[1]{\mathbf{#1}}
\newcommand{\Var}{{\mathbb V}\mathrm{ar}}
\newcommand{\cP}{{\mathcal P}}
\newcommand{\cL}{{\mathcal L}}
\title[The $\mathbb{E}\mathcal{L}_2-$discrepancy of jittered sampling]{\large On the expected $\mathcal{L}_2-$discrepancy of jittered sampling}
\author{Nathan Kirk}
\address{Queen's University Belfast, Belfast, United Kingdom.}
\email{nkirk09@qub.ac.uk}
\author{Florian Pausinger}
\address{Queen's University Belfast, Belfast, United Kingdom.}
\email{f.pausinger@qub.ac.uk}
\date{}
\begin{document}

\keywords{Jittered sampling; star-discrepancy; $\mathcal{L}_2$-discrepancy; Hickernell $\mathcal{L}_2$-discrepancy}
\maketitle


\begin{abstract}
For $m, d \in \mathbb{N}$, a jittered sample of $N=m^d$ points can be constructed by partitioning $[0,1]^d$ into $m^d$ axis-aligned equivolume boxes and placing one point independently and uniformly at random inside each box. We utilise a formula for the expected $\mathcal{L}_2-$discrepancy of stratified samples stemming from general equivolume partitions of $[0,1]^d$ which recently appeared, to derive a closed form expression for the expected $\mathcal{L}_2-$discrepancy of a jittered point set for any $m, d \in \mathbb{N}$. 
As a second main result we derive a similar formula for the expected Hickernell $\mathcal{L}_2-$discrepancy of a jittered point set which also takes all projections of the point set to lower dimensional faces of the unit cube into account.
\end{abstract}


\section{Introduction}

\subsection{Discrepancy and jittered sampling}

In the classical setting, discrepancy theory concerns itself with the study of the irregularity of distribution of point sets contained in the unit cube. There are numerous notions and variants of discrepancy with wide ranging applications in numerical integration, computer graphics, machine learning and option pricing in financial mathematics to name just a few \cite{DP, leopill, lem, Nieder}. 

One such discrepancy measure is the so-called \textit{$\mathcal{L}_p-$discrepancy} for $1 \leq p < \infty$; in this paper we will focus our attention primarily on the case $p=2$ while briefly mentioning the star discrepancy in order to state the motivation for this note.
For a point set $\mathcal{P}_N = \{\mathbf{x}_1, \ldots, \mathbf{x}_N\}$ contained in $[0,1]^d$, we define the $\mathcal{L}_p-$discrepancy as
$$\mathcal{L}_p(\mathcal{P}_N) = \left( \int_{[0, 1]^d} \left| \frac{\# \left( \mathcal{P}_N \cap [0, \mathbf{x}) \right) }{N} - |[0, \mathbf{x})| \right|^p d\mathbf{x} \right)^{1/p}$$ 
in which $1 \leq p < \infty$, $\#(\mathcal{P}_N \cap [0, \mathbf{x}))$ counts the number of indices $1 \leq i \leq N$ such that $\mathbf{x}_i \in [0, \mathbf{x})$ and $|[0, \mathbf{x})|$ is the standard Lebesque measure of the subset $[0, \mathbf{x}) = \prod_{j=1}^d [0, x_j)$ with $\mathbf{x} = (x_1, \ldots, x_d)$. 
For an infinite sequence $\mathcal{X}$, we calculate the discrepancy of its first $N$ elements; i.e. the $\mathcal{L}_p-$discrepancy of $\mathcal{X}$ is defined to be the $\mathcal{L}_p-$discrepancy of the first $N$ terms of $\mathcal{X}$ for an arbitrary, but fixed $N$. 
Note that for ease of notation, we simply write $\mathcal{L}_p(\cdot)$ instead of $\mathcal{L}_{p,N}(\cdot)$ since we only work with finite point sets in this note.

As will be the case throughout this note, when $\mathcal{P}_N$ is a set of random samples the \textit{mean $p^{th}$ power $\mathcal{L}_p-$discrepancy} $\mathbb{E}\mathcal{L}^p_p(\mathcal{P}_N)$ is often utilised as the discrepancy measure, where $\mathbb{E}$ denotes the probabilistic expectation. 
Usually we simply refer to this as the mean (or expected) $\mathcal{L}_p-$discrepancy. 
The $\mathcal{L}_p-$discrepancy as defined above is simply taking the $\mathcal{L}_p$ norm of the discrepancy function (the deviation of the measure of a test set $[0, \mathbf{x})$ from the fraction of points lying inside $[0, \mathbf{x})$). 
One can also take the $\mathcal{L}_{\infty}$ norm of the discrepancy function to create another measure of irregularity of distribution called the \textit{star-discrepancy}, defined as $$D_N^*(\mathcal{P}) = \sup_{\mathbf{x} \in [0,1]^d} \left| \frac{\# \left( \mathcal{P}_N \cap [0, \mathbf{x}) \right) }{N} - |[0, \mathbf{x})| \right|.$$

The \textit{Hickernell $\mathcal{L}_2$-discrepancy} was introduced by Hickernell in \cite{Hickernell, Hickernell2}; note that some ideas can be traced back to \cite{zaremba}, while, to the best of our knowledge, the explicit definition appeared for the first time in \cite{Hickernell, Hickernell2}. It considers not only the ordinary $\mathcal{L}_2-$ discrepancy of a point set in the $d$-dimensional unit cube, but also the $\mathcal{L}_2-$discrepancy of all projections of the point set onto lower-dimensional faces of the $d$-dimensional unit cube. 

For any nonempty subset $s \subseteq \{1:d\}$ of coordinate indices, let $[0,1]^s$ denote the $|s|-$dimensional unit cube spanned by the coordinate axes in $s$ and let $\mathbb{N}_m^s$ be the set of $|s|-$dimensional vectors with coordinates in $s$ consisting only of entries from $\{1:m\}$. Likewise, if $\boldsymbol{\Omega} = \{\Omega_1, \ldots, \Omega_N\}$ is a jittered partition of $[0,1]^d$ and $\mathcal{P}_{\boldsymbol{\Omega}}$ is the jittered $N-$element point set obtained from the partition $\boldsymbol{\Omega}$, then $\boldsymbol{\Omega}^s = \{\Omega_1^s, \ldots, \Omega_N^s\}$ and $\mathcal{P}_{\boldsymbol{\Omega}}^s$ denote the projection of the partition and jittered point set onto $[0,1]^s$ respectively. Similarly, $\mathbf{x}^s$ denotes the projection of a general vector $\mathbf{x} \in [0,1]^d$ into $[0,1]^s$.
Then, the \textit{Hickernell $\mathcal{L}_2$-discrepancy} of a point set $\mathcal{P} \subset [0,1]^d$ is given by
\begin{equation*}
	D_{H,2}(\mathcal{P}) := \left( \sum_{\emptyset \neq s \subseteq \{1:d\}}  \int_{[0,1]^s} \left| \frac{\#(\mathcal{P}^s \cap [0,\mathbf{x}^s)}{N} - |[0, \mathbf{x}^s)| \right|^2 d\mathbf{x}^s \right)^{1/2}
\end{equation*}
or more concisely using the notation already established, 
\begin{equation}\label{eq:defzaremba}
	D_{H,2}(\mathcal{P}) := \left( \sum_{\emptyset \neq s \subseteq \{1:d\}} \mathcal{L}_2^2(\mathcal{P}^s) \right)^{1/2}
\end{equation}
where $\mathcal{P}^s$ denotes the projection of the sample $\mathcal{P}$ into the cube $[0,1]^s$.

In fact, Hickernell defined this discrepancy for general $p$: For $1 \leq p < \infty$, the \textit{Hickernell $\mathcal{L}_p-$discrepancy} is defined as
\begin{equation*}\label{eq:defhickernell}
	D_{H,p}(\mathcal{P}) := \left( \sum_{\emptyset \neq s \subseteq \{1:d\}} \mathcal{L}_p^p(\mathcal{P}^s) \right)^{1/p}.
\end{equation*}

Constructions of deterministic point sets are widely used in the context of numerical integration via quasi-Monte Carlo (QMC) methods due to the low discrepancy value leading to improved order of the integration error. We refer the reader to the Koksma-Hlawka inequality \cite{APTS, DP, Hickernell, Hickernell2, Hlawka, Koksma, PausSvane}. 
The optimal order for the $\mathcal{L}_2-$discrepancy of a finite point set contained inside $[0,1]^d$ for $d\geq 2$ is $\mathcal{O}\left( (\log N)^{\frac{d-1}{2}}/N \right)$, conversely for a set of i.i.d uniform random points (a Monte Carlo point set) the expected discrepancy has order $\mathcal{O}(N^{-1/2})$. 
An element of randomness is often desirable or even necessary in real world simulations. 
Therefore to achieve the best of both worlds, it is of interest to select a deterministic point set and utilise a randomisation technique. 
The output of this process is aptly called, a \textit{randomised quasi-Monte Carlo} (RQMC) point set; RQMC point sets often have the benefit of possessing better distribution properties than MC samples while also featuring an element of randomness, useful for simulation of real world phenomena. 

\textit{Classical jittered sampling} is an example of an RQMC point set.  For a fixed $m \in \mathbb{N}$, we form a jittered point set by partitioning $[0,1]^d$ into $m^d$ axis-aligned cubes of equal measure and placing a random point inside each of the $N=m^d$ cubes independently. See Figure \ref{fig:jitteredintro} for an illustration.  

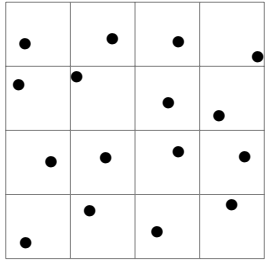
\begin{figure}
	\centering
	\begin{tikzpicture}[scale=0.85]
		
		\draw[step=1cm,gray,very thin] (8,0) grid (12,4);
		
		\node at (8.31,0.23) {$\bullet$}; 
		\node at (8.7,1.5) {$\bullet$}; 
		\node at (8.2,2.7) {$\bullet$}; 
		\node at (8.3,3.34) {$\bullet$};

		\node at (9.3,0.74) {$\bullet$}; 
		\node at (9.55,1.56) {$\bullet$}; 
		\node at (9.1,2.83) {$\bullet$}; 
		\node at (9.65,3.42) {$\bullet$};

		\node at (10.34,0.41) {$\bullet$}; 
		\node at (10.67,1.66) {$\bullet$}; 
		\node at (10.52,2.42) {$\bullet$}; 
		\node at (10.67,3.37) {$\bullet$};

		\node at (11.5,0.82) {$\bullet$}; 
		\node at (11.7,1.58) {$\bullet$}; 
		\node at (11.3,2.21) {$\bullet$}; 
		\node at (11.9,3.14) {$\bullet$};

	\end{tikzpicture}
	\caption{Jittered sampling for $d=2$ and $m=4$.}
	\label{fig:jitteredintro}
\end{figure}
		
\subsection{Exact formulas for the $\mathcal{L}_2$-discrepancy} \label{sec:L2}
Despite recurring criticism (see for example \cite{matousek}) the $\mathcal{L}_2$-discrepancy is a very popular notion of discrepancy mostly due to its simplicity. In contrast to other notions of discrepancy, we not only have an explicit formula for the $\mathcal{L}_2$-discrepancy of an arbitrary point set, known as Warnock's formula \cite{warnock}, as well as a fast implementation \cite{heinrich1, heinrich2}, but we also know the optimal order of the $\mathcal{L}_2$-discrepancy, i.e., we can look for point sets that are optimal with respect to $\mathcal{L}_2$-discrepancy; see \cite[Section 3.2]{DP} and references therein.

Warnock's formula as presented in \cite[Proposition 2.15]{DP} to calculate the $\cL_2$-discrepancy of a given point set holds for any point set $\cP=\{\bv x_0, \ldots, \bv {x}_{N-1} \} \in [0,1]^d$. We have
\begin{equation} \label{warnock}
\cL_2(\cP)^2 = \frac{1}{3^d} - \frac{2}{N} \sum_{n=0}^{N-1} \prod_{i=0}^d \frac{1-x_{n,i}^2}{2} + \frac{1}{N^2} \sum_{m,n=0}^{N-1} \prod_{i=0}^d \min(1-x_{m,i}, 1-x_{n,i}),
\end{equation}
in which $x_{n,i}$ is the $i$-th component of the $\bv {x}_n$. Apart from this general formula, it is often possible to give explicit, closed formulas for the $\mathcal{L}_2$-discrepancy of particular deterministic sequences. As an example we mention the exact formula for the symmetrized Hammersley point set as derived in \cite{kritz}.

\subsection{Main aim and outline}

B. Doerr \cite{Doerr} recently proved a tight star discrepancy estimate for jittered sampling. That is, the order of magnitude for the star discrepancy of a jittered point set is 
$$\mathbb{E} D_N^* \in \Theta \left( \frac{\sqrt{d} \sqrt{1+ \log(N/d)}}{N^{\frac{1}{2} + \frac{1}{2d}}} \right)$$
for all $m$ and $d$ with $m \geq d$. 
To the best of our knowledge, there is no such statement regarding an estimate or exact formula for the expected $\mathcal{L}_2-$discrepancy of a jittered sample for arbitrary dimension $d$ and number of points $N = m^d$. 
The results of Section \ref{sec:L2}, motivate us to look for such a closed formula. Our main result is as follows:

\begin{theorem}\label{thm:discforddimensions}
	Let $\boldsymbol{\Omega} =\{\Omega_{\boldsymbol{\mathit{i}}} : \boldsymbol{\mathit{i}} \in \mathbb{N}_m^d\}$ be a jittered partition of $[0,1]^d$ for $m \geq 2$. Then
	\begin{equation}\label{eq:L2discrepancy}
		\mathbb{E}\mathcal{L}_2^2(\mathcal{P}_{\boldsymbol{\Omega}}) = \frac{1}{m^{2d}} \left[ \left( \frac{m-1}{2}+\frac{1}{2} \right)^d - \left( \frac{m-1}{2}+\frac{1}{3} \right)^d \right].
	\end{equation}
\end{theorem}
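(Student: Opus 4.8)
The plan is to start from Warnock's formula \eqref{warnock} and take expectations term by term, exploiting the independence structure of the jittered sample. Write the random point in the box $\Omega_{\boldsymbol{i}}$, $\boldsymbol{i}=(i_1,\dots,i_d)\in\mathbb{N}_m^d$, as $\mathbf{x}_{\boldsymbol{i}}$ with $j$-th coordinate $x_{\boldsymbol{i},j}$ uniform on $[(i_j-1)/m,\,i_j/m]$; all coordinates of all points are mutually independent. Since each summand in \eqref{warnock} is a product over the $d$ coordinates, independence lets every expectation factor into a product of one-dimensional integrals, and the outer sum over multi-indices factors into a product of sums over single indices. This reduces the whole computation to a handful of elementary one-dimensional averages.

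First I would dispatch the linear (second) term. Setting $C(i):=\mathbb{E}[(1-x_{\boldsymbol{i},j}^2)/2]$, a direct second-moment computation gives $\mathbb{E}[x_{\boldsymbol{i},j}^2]=(3i^2-3i+1)/(3m^2)$, and the telescoping identity $3i^2-3i+1=i^3-(i-1)^3$ yields $\sum_{i=1}^m C(i)=m/3$. Hence the linear contribution is $\frac{2}{N}\sum_{\boldsymbol{i}}\prod_j C(i_j)=\frac{2}{N}(m/3)^d=2/3^d$.

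The quadratic term is the crux. For two independent points I must evaluate $\mathbb{E}[\min(1-x_{\boldsymbol{i},j},1-x_{\boldsymbol{k},j})]=1-\mathbb{E}[\max(x_{\boldsymbol{i},j},x_{\boldsymbol{k},j})]$. The key simplification is geometric: if $i_j\neq k_j$ the two coordinates lie in disjoint subintervals and are automatically ordered, so the maximum is simply the coordinate in the higher interval; only when $i_j=k_j$ must one integrate the maximum of two independent uniforms on a common interval. This produces a single-index function $B(i,k)$ with $\sum_{i,k=1}^m B(i,k)=m^2/3$ and diagonal $\sum_{i=1}^m B(i,i)=(3m-1)/6$. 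The one subtlety requiring genuine care is that Warnock's double sum runs over all pairs including $\boldsymbol{i}=\boldsymbol{k}$, where the two coordinates are the \emph{same} random variable rather than independent copies. I would therefore split $\sum_{\boldsymbol{i},\boldsymbol{k}}=\sum_{\boldsymbol{i}\neq\boldsymbol{k}}+\sum_{\boldsymbol{i}=\boldsymbol{k}}$, write the off-diagonal part as the fully factored product $\prod_j(\sum_{i_j,k_j}B)$ minus its factored diagonal $\sum_{\boldsymbol{i}}\prod_j B(i_j,i_j)$, and then add back the genuine diagonal $\sum_{\boldsymbol{i}}\prod_j A(i_j)$, where $A(i):=\mathbb{E}[1-x_{\boldsymbol{i},j}]=1-(2i-1)/(2m)$.

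Assembling the pieces, the three ``bulk'' contributions cancel exactly: the constant $1/3^d$, the linear term $-2/3^d$, and $\frac{1}{N^2}\prod_j(\sum_{i,k}B)=\frac{1}{N^2}(m^2/3)^d=1/3^d$ sum to zero. What survives is only the diagonal correction
$$\mathbb{E}\mathcal{L}_2^2(\mathcal{P}_{\boldsymbol{\Omega}})=\frac{1}{N^2}\sum_{\boldsymbol{i}}\left(\prod_{j=1}^d A(i_j)-\prod_{j=1}^d B(i_j,i_j)\right)=\frac{1}{m^{2d}}\left[\left(\sum_{i=1}^m A(i)\right)^d-\left(\sum_{i=1}^m B(i,i)\right)^d\right].$$
Evaluating the elementary sums $\sum_i A(i)=m/2$ and $\sum_i B(i,i)=(3m-1)/6$, and rewriting $m/2=(m-1)/2+1/2$ and $(3m-1)/6=(m-1)/2+1/3$, gives exactly \eqref{eq:L2discrepancy}. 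I expect the main obstacle to be the careful separation of the genuine diagonal from the factored double sum; once that bookkeeping is correct, the remaining sums are routine and the striking cancellation of all the bulk terms makes the closed form fall out.
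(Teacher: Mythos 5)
Your proof is correct, but it takes a genuinely different route from the paper. You start from Warnock's formula \eqref{warnock} and take expectations, using independence of the $N$ points and of their coordinates to factor every term into one-dimensional sums: the constant $1/3^d$, the linear term $-2/3^d$ (via $\sum_i C(i)=m/3$), and the fully factored part of the quadratic term $\frac{1}{N^2}(m^2/3)^d=1/3^d$ cancel exactly, leaving only the diagonal correction $\frac{1}{m^{2d}}\bigl[(\sum_i A(i))^d-(\sum_i B(i,i))^d\bigr]$ with $\sum_i A(i)=m/2=\frac{m-1}{2}+\frac12$ and $\sum_i B(i,i)=\frac{3m-1}{6}=\frac{m-1}{2}+\frac13$; I verified all of these sums and they are right. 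Your one delicate step --- distinguishing the genuine diagonal $\boldsymbol{i}=\boldsymbol{k}$ (same random point, $\min$ of a variable with itself) from the factored diagonal (two independent copies in the same box) --- is exactly the point where a careless computation would go wrong, and you handle it correctly. The paper instead starts from the stratified-sampling variance formula \eqref{eq:originalequation} of Kiderlen--Pausinger, reducing the problem to $\sum_{\boldsymbol{i}}\int q_{\boldsymbol{i}}(1-q_{\boldsymbol{i}})$; it then localizes the integrand to the box $\Omega_{\boldsymbol{i}}$ and the $2^d-2$ regions $\mathcal{I}^u_{\boldsymbol{i}}$ (Lemma \ref{lem:regionsddimensions}), computes each regional integral in closed form (Lemma \ref{lem:integraloverI}), and collapses the resulting double sum with the binomial theorem. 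What your approach buys: it bypasses the geometric decomposition and the external proposition from \cite{MKFP22}, resting only on the classical Warnock identity (note that \eqref{warnock} as printed has index typos --- the products should run over the $d$ coordinates --- which your computation implicitly corrects), and the cancellation of the bulk terms is conceptually transparent: the expected squared discrepancy of jittered sampling is a pure diagonal (variance) effect. What the paper's approach buys: its intermediate results are reused verbatim for Proposition \ref{thm:projecteddiscrepancy} on projections and hence for Theorem \ref{thm:hickernelldisc}, whereas extending your argument to projections would require extra bookkeeping since each projected box then contains $m^{d-|s|}$ independent points; moreover, formula \eqref{eq:originalequation} applies to arbitrary equivolume partitions, which is the broader framework the paper works in.
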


\begin{remark}
In Proposition \ref{thm:projecteddiscrepancy} we derive a similar formula for the $\mathcal{L}_2$-discrepancy of projections of jittered point samples in $[0,1]^d$ to lower dimensional faces of the unit cube.
\end{remark}

As a second main result, we derive a closed formula for the Hickernell $\mathcal{L}_2$-discrepancy of jittered sampling:
\begin{theorem}\label{thm:hickernelldisc}
	Let $\boldsymbol{\Omega} = \{ \Omega_{\boldsymbol{i}} : \boldsymbol{i} \in \mathbb{N}_m^d\}$ be a jittered partition of $[0,1]^d$ for $m \geq 2$. Then $$\mathbb{E}D_{H,2}^2(\mathcal{P}_{\boldsymbol{\Omega}}) = \sum_{j=1}^{d} \frac{1}{m^{d+j}} \binom{d}{j} \left[ \left( \frac{m-1}{2} + \frac{1}{2} \right)^j - \left(  \frac{m-1}{2} + \frac{1}{3} \right)^j \right].$$
\end{theorem}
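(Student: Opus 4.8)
The plan is to expand the definition \eqref{eq:defzaremba} of the Hickernell discrepancy, push the expectation inside the finite sum over coordinate subsets, and reduce the whole computation to the expected $\mathcal{L}_2^2$-discrepancy of a single projected sample. Squaring \eqref{eq:defzaremba} and using linearity of expectation gives
\begin{equation*}
\mathbb{E}D_{H,2}^2(\mathcal{P}_{\boldsymbol{\Omega}}) = \sum_{\emptyset \neq s \subseteq \{1:d\}} \mathbb{E}\mathcal{L}_2^2(\mathcal{P}_{\boldsymbol{\Omega}}^s).
\end{equation*}
Because the jittered construction is invariant under permutations of the $d$ coordinate axes, the summand depends on $s$ only through its cardinality $|s| = j$. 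Hence I would group the $\binom{d}{j}$ subsets of each fixed size $j \in \{1, \ldots, d\}$ and write $\mathbb{E}D_{H,2}^2(\mathcal{P}_{\boldsymbol{\Omega}}) = \sum_{j=1}^d \binom{d}{j}\, \mathbb{E}\mathcal{L}_2^2(\mathcal{P}_{\boldsymbol{\Omega}}^{s_j})$ for any representative subset $s_j$ with $|s_j| = j$. Matching this against the claimed formula, it suffices to prove the projected identity
\begin{equation*}
\mathbb{E}\mathcal{L}_2^2(\mathcal{P}_{\boldsymbol{\Omega}}^{s_j}) = \frac{1}{m^{d+j}}\left[\left(\frac{m-1}{2}+\frac12\right)^j - \left(\frac{m-1}{2}+\frac13\right)^j\right],
\end{equation*}
which is exactly the statement supplied by Proposition \ref{thm:projecteddiscrepancy}; the remainder is then pure bookkeeping.

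The content therefore lies in the projected identity, and the crucial structural observation is that $\mathcal{P}_{\boldsymbol{\Omega}}^{s_j}$ is \emph{not} itself a $j$-dimensional jittered sample. Projecting the $d$-dimensional grid of $m^d$ boxes onto the face $[0,1]^{s_j}$ collapses the $m^{d-j}$ boxes sharing a given set of $s_j$-coordinates onto a single $j$-dimensional cell of side $1/m$. Since projecting a uniform point of a product box yields a uniform point of the projected cell, and distinct boxes are sampled independently, $\mathcal{P}_{\boldsymbol{\Omega}}^{s_j}$ is a stratified sample of $N = m^d$ mutually independent points in which each of the $m^j$ cells receives exactly $m^{d-j}$ i.i.d.\ uniform points. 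This is what forces the normalising power $m^{d+j}$ rather than the $m^{2j}$ that a genuine $j$-dimensional jittered set would produce, while the bracket still carries the exponent $j$.

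To evaluate the projected discrepancy I would apply Warnock's formula \eqref{warnock} in dimension $j$ to these $N = m^d$ points and take expectations term by term, using that within each point the $j$ coordinates are independent and that distinct points are independent. The constant term is $3^{-j}$; the linear term factorises over coordinates into $j$ identical single-coordinate sums, each telescoping to $m/3$, and contributes $-2\cdot 3^{-j}$. The quadratic double sum is the delicate part: I would split it into the diagonal $p=q$ (where the integrand is $1-x_{p,i}$), pairs of distinct points in the same cell, and pairs in different cells, since $\mathbb{E}\min(1-x_{p,i},1-x_{q,i})$ behaves differently in these three regimes. Writing the fully factorised double sum over cells as $H^j$ with $H = \sum_{b,b'} \mathbb{E}[1-\max(X_b,Y_{b'})]$, the three contributions assemble into $m^{d-j}G^j + m^{2(d-j)}H^j - m^{d-j}H_d^j$, where $G = \sum_b \mathbb{E}[1-X_b]$ and $H_d = \sum_b \mathbb{E}[1-\max(X_b,X_b')]$ (a second independent copy in the same cell). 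A short computation gives $G = m/2$, $H_d = (3m-1)/6$ and, crucially, $H = m^2/3$, the last identity following because sampling a uniform cell and then a uniform point inside it reproduces a single uniform variable on $[0,1]$, so that $H/m^2 = \mathbb{E}_{U[0,1]^2}[1-\max(X,Y)] = 1/3$. Substituting, the term $H^j/m^{2j} = 3^{-j}$ cancels the leftover constant and linear contributions, leaving precisely $m^{-(d+j)}[(m/2)^j - ((3m-1)/6)^j]$, which is the desired bracket after rewriting $m/2 = \tfrac{m-1}{2}+\tfrac12$ and $(3m-1)/6 = \tfrac{m-1}{2}+\tfrac13$.

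The main obstacle is the careful separation of Warnock's quadratic double sum into its three regimes together with the correct counting of point pairs within and across cells; the telescoping of the single-coordinate sums and the reduction $H = m^2/3$ are the simplifications that make the closed form collapse. Once the projected identity is established (equivalently, once Proposition \ref{thm:projecteddiscrepancy} is invoked), summing $\binom{d}{j}$ copies over $j = 1, \ldots, d$ completes the proof, and a useful consistency check is that the $j=d$ term recovers Theorem \ref{thm:discforddimensions} exactly.
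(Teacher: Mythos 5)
Your proposal is correct, and its top-level reduction coincides with the paper's: square the definition \eqref{eq:defzaremba}, push the expectation through the finite sum, group the subsets $s$ by cardinality using the permutation symmetry of the construction, and reduce everything to the projected identity of Proposition \ref{thm:projecteddiscrepancy}. Where you genuinely diverge is in how that identity is established. The paper never touches Warnock's formula: it uses unbiasedness of $Z_{\mathbf{x}^s}$ plus Tonelli's theorem to write $\mathbb{E}\mathcal{L}_2^2(\mathcal{P}_{\boldsymbol{\Omega}}^s)=\int_{[0,1]^s}\Var(Z_{\mathbf{x}^s})\,d\mathbf{x}^s$, observes that $N\cdot Z_{\mathbf{x}^s}$ is a sum of independent Bernoulli variables ($m^{d-|s|}$ per projected cell, with success probability $\tilde q_{\boldsymbol{i}}(\mathbf{x}^s)$), so the variance is $\sum_{\boldsymbol{i}} m^{d-|s|}\tilde q_{\boldsymbol{i}}(1-\tilde q_{\boldsymbol{i}})$, and then evaluates the resulting integrals geometrically by reusing the region decomposition and computations (Lemmas \ref{lem:regionsddimensions}, \ref{lem:qcasesddimensions} and \ref{lem:integraloverI}) already built for Theorem \ref{thm:discforddimensions}. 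You instead apply Warnock's formula \eqref{warnock} in dimension $j$ to the $N=m^d$ projected points and take expectations term by term; your three-regime split of the quadratic double sum (diagonal, distinct points in one cell, fully factorised cell sum with the $\boldsymbol{a}=\boldsymbol{a}'$ correction) is exactly right, your values $G=m/2$, $H_d=(3m-1)/6$, $H=m^2/3$ all check out, and the assembly $m^{d-j}G^j+m^{2(d-j)}H^j-m^{d-j}H_d^j$ together with the constant term $3^{-j}$ and linear term $-2\cdot 3^{-j}$ does collapse to $m^{-(d+j)}\left[(m/2)^j-\left((3m-1)/6\right)^j\right]$, which equals the claimed bracket. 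What each route buys: yours is self-contained and elementary, needing neither the stratified-sampling variance proposition imported from the earlier papers nor the $\mathcal{I}^u_{\boldsymbol{i}}$ machinery, at the price of delicate pair counting; the paper's argument is shorter given the lemmas already in place and makes transparent the structural reason behind the formula, namely that the projection of a jittered partition is again an equivolume stratified sample (now with several independent points per cell), which is precisely what both arguments use to produce the normalisation $m^{d+j}$ in place of the $m^{2j}$ of a genuine $j$-dimensional jittered set. Your consistency check that the $j=d$ term recovers Theorem \ref{thm:discforddimensions} is also valid.
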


Based on Theorem \ref{thm:discforddimensions} we can derive the following corollary:

\begin{corollary}
	For a jittered partition $\boldsymbol{\Omega} =\{\Omega_{\boldsymbol{\mathit{i}}} : \boldsymbol{\mathit{i}} \in \mathbb{N}_m^d\}$ of $[0,1]^d$ containing $N=m^d$ points where $m \geq 2$, $$\mathbb{E}\mathcal{L}_2(\mathcal{P}_{\boldsymbol{\Omega}}) = \Theta \left( \frac{1}{m^{\frac{d}{2}-\frac{1}{2}}} \right) = \Theta \left( \frac{1}{N^{\frac{1}{2}+\frac{1}{2d}}} \right).$$
\end{corollary}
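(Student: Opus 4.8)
The plan is to read the order directly off the exact formula in Theorem~\ref{thm:discforddimensions}, treating $d$ as fixed and letting $m \to \infty$. Writing $a := \frac{m-1}{2} + \frac{1}{2} = \frac{m}{2}$ and $b := \frac{m-1}{2} + \frac{1}{3} = \frac{3m-1}{6}$, the theorem reads
\begin{equation*}
\mathbb{E}\mathcal{L}_2^2(\mathcal{P}_{\boldsymbol{\Omega}}) = \frac{a^d - b^d}{m^{2d}},
\end{equation*}
so the entire problem reduces to pinning down the size of $a^d - b^d$. The key observation is that $a$ and $b$ differ only by a constant, $a - b = \frac{1}{2} - \frac{1}{3} = \frac{1}{6}$, while both grow linearly, $a \sim b \sim \frac{m}{2}$. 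A naive estimate $a^d \approx (m/2)^d$, $b^d\approx (m/2)^d$ would suggest cancellation of the leading terms, so the point is to quantify the surviving difference.

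First I would factor the difference of $d$-th powers as
\begin{equation*}
a^d - b^d = (a - b)\sum_{k=0}^{d-1} a^{d-1-k} b^{k} = \frac{1}{6}\sum_{k=0}^{d-1} a^{d-1-k} b^{k}.
\end{equation*}
Since $0 < b < a$, each of the $d$ summands lies between $b^{d-1}$ and $a^{d-1}$, giving the two-sided bound
\begin{equation*}
\tfrac{1}{6}\, b^{d-1} \;\le\; a^d - b^d \;\le\; \tfrac{d}{6}\, a^{d-1}.
\end{equation*}
As both $b^{d-1}=\Theta(m^{d-1})$ and $a^{d-1}=\Theta(m^{d-1})$ for fixed $d$, this yields $a^d - b^d = \Theta(m^{d-1})$, and hence
\begin{equation*}
\mathbb{E}\mathcal{L}_2^2(\mathcal{P}_{\boldsymbol{\Omega}}) = \Theta\!\left(\frac{m^{d-1}}{m^{2d}}\right) = \Theta\!\left(\frac{1}{m^{d+1}}\right).
\end{equation*}

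Finally I would take square roots to pass from the mean-square quantity to $\mathbb{E}\mathcal{L}_2$, obtaining $\mathbb{E}\mathcal{L}_2(\mathcal{P}_{\boldsymbol{\Omega}}) = \Theta\!\left(m^{-(d+1)/2}\right)$, and then substitute $m = N^{1/d}$, rewriting the exponent as $\tfrac{d+1}{2d} = \tfrac{1}{2} + \tfrac{1}{2d}$ to recover the order $\Theta\!\left(N^{-(1/2 + 1/(2d))}\right)$. The step requiring the most care is this last passage: one must keep track of whether the implied constants in $\Theta$ are permitted to depend on $d$ (they are, through the factor $d$ in the upper bound above, so the statement is an asymptotic in $m$ for each fixed $d$), and one must be explicit that $\mathbb{E}\mathcal{L}_2$ is read as the root mean square $\sqrt{\mathbb{E}\mathcal{L}_2^2}$. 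Under any other reading, moving from the second moment to the first moment would demand an additional concentration estimate (or a Jensen-type lower bound) rather than a bare square root, and that — not the elementary algebra — would be the genuine obstacle.
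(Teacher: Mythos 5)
Your proposal is correct, and its overall plan coincides with the paper's: read the order directly from the exact formula of Theorem~\ref{thm:discforddimensions}, show that the bracketed difference is $\Theta(m^{d-1})$ for fixed $d$, take a square root, and substitute $m=N^{1/d}$. The only genuine difference is the algebraic device used to estimate that difference. The paper expands both $d$-th powers via the binomial theorem, obtaining $\sum_{k=0}^d \binom{d}{k}\left(\frac{m-1}{2}\right)^k\left[\left(\frac{1}{2}\right)^{d-k}-\left(\frac{1}{3}\right)^{d-k}\right]$, then extracts a lower bound by keeping only the $k=d-1$ term and an upper bound by observing that the leading $k=d$ terms cancel, leaving a polynomial of degree $d-1$ in $m$. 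You instead factor $a^d-b^d=(a-b)\sum_{k=0}^{d-1}a^{d-1-k}b^{k}$ with $a-b=\frac{1}{6}$, which yields both bounds $\frac{1}{6}b^{d-1}\le a^d-b^d\le \frac{d}{6}a^{d-1}$ in a single stroke; this is marginally cleaner and makes the two-sided nature of the estimate explicit, whereas the paper needs two separate observations. Your closing remark that $\mathbb{E}\mathcal{L}_2$ must be read as the root mean square $\sqrt{\mathbb{E}\mathcal{L}_2^2}$ is precisely the paper's convention (declared in the introduction, where the expected $\mathcal{L}_p$-discrepancy means $\mathbb{E}\mathcal{L}_p^p$), so no concentration or Jensen-type argument is needed, matching the paper's bare ``taking the square root.'' One further point worth flagging: your derived order $\Theta\left(m^{-(d+1)/2}\right)$ agrees with the $N$-form $\Theta\left(N^{-1/2-1/(2d)}\right)$ but not with the first printed expression $\Theta\left(1/m^{d/2-1/2}\right)$; since $N^{1/2+1/(2d)}=m^{(d+1)/2}$, the two forms in the corollary as stated are mutually inconsistent, and your computation --- like the paper's own proof, whose lower bound is of order $m^{d-1}/m^{2d}$ --- shows the correct $m$-exponent is $\frac{d}{2}+\frac{1}{2}$, i.e., the printed $m$-form contains a sign typo.
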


\begin{proof}
We can expand the difference in the brackets in Theorem \ref{thm:discforddimensions} using the binomial theorem:
\begin{align*}
\sum_{k=0}^d \binom{d}{k} \left(\frac{m-1}{2} \right)^k \left[ \left( \frac{1}{2} \right)^{d-k} - \left( \frac{1}{3} \right)^{d-k} \right]
\end{align*}
Each summand in this sum is positive. To get a lower bound, it is sufficient to truncate the sum and only consider the terms for $k=d$ and $k=d-1$, i.e., 
$$\mathbb{E}\mathcal{L}_2^2(\mathcal{P}_{\boldsymbol{\Omega}}) \geq  \frac{1}{m^{2d}}  \frac{d}{6}\left(\frac{m-1}{2} \right)^{d-1}.$$
Similarly, to obtain an upper bound, we simply notice that the leading terms cancel in the difference, such that we get a polynomial in $m$ of degree $d-1$.
Taking the square root and substituting $m=N^{1/d}$ leads to the assertion.
\end{proof}

Our proof utilizes a proposition from \cite{MKFP22} (see also \cite{MKFP21}) regarding the expected discrepancy of stratified samples obtained from an equivolume partition of the cube. 

\begin{proposition} [Proposition 3, \cite{MKFP22}]
	If $\boldsymbol{\Omega} = \{\Omega_1, \ldots, \Omega_N\}$ is an equivolume partition of a compact convex set $K \subset \mathbb{R}^d$ with $|K| > 0$, then 
	\begin{equation}\label{eq:originalequation}
		\mathbb{E}\mathcal{L}_2^2(\mathcal{P}_{\boldsymbol{\Omega}}) = \frac{1}{N^2 |K|} \sum_{i=1}^N \int_{K} q_i (\mathbf{x}) \left( 1 - q_i (\mathbf{x}) \right) d\mathbf{x}
	\end{equation} 
	with $q_i (\mathbf{x}) = \frac{\left| \Omega_i \cap [0, \mathbf{x}) \right|}{\left| \Omega_i \right|}.$
\end{proposition}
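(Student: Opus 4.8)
The plan is to realise $\mathbb{E}\mathcal{L}_2^2(\mathcal{P}_{\boldsymbol{\Omega}})$ as the integral of a pointwise variance and to use the equivolume hypothesis to show that the stratified estimator is unbiased at every test corner. Write $\mathbf{X}_1, \ldots, \mathbf{X}_N$ for the independent random points, with $\mathbf{X}_i$ uniform on $\Omega_i$, and for a fixed $\mathbf{x}$ set $Z_i(\mathbf{x}) = \mathbf{1}[\mathbf{X}_i \in [0,\mathbf{x})]$. By uniformity on $\Omega_i$,
\[
\mathbb{E}[Z_i(\mathbf{x})] = \mathbb{P}(\mathbf{X}_i \in [0,\mathbf{x})) = \frac{|\Omega_i \cap [0,\mathbf{x})|}{|\Omega_i|} = q_i(\mathbf{x}),
\]
so each $Z_i(\mathbf{x})$ is Bernoulli with parameter $q_i(\mathbf{x})$, and the $Z_i(\mathbf{x})$ are mutually independent across $i$. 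The local discrepancy function is $\Delta(\mathbf{x}) = \frac{1}{N}\sum_{i=1}^N Z_i(\mathbf{x}) - \frac{|K \cap [0,\mathbf{x})|}{|K|}$, and since the integrand is bounded, Fubini--Tonelli permits me to write $\mathbb{E}\mathcal{L}_2^2(\mathcal{P}_{\boldsymbol{\Omega}}) = \frac{1}{|K|}\int_K \mathbb{E}[\Delta(\mathbf{x})^2]\,d\mathbf{x}$, where the factor $1/|K|$ reflects the convention (reducing to the stated cube definition when $|K|=1$) that for a general body the squared discrepancy is measured against the normalised measure $d\mathbf{x}/|K|$.

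The key step is that the equivolume assumption makes $\Delta$ a centred sum. Because $|\Omega_i| = |K|/N$ for every $i$ and the $\Omega_i$ tile $K$,
\[
\frac{1}{N}\sum_{i=1}^N q_i(\mathbf{x}) = \frac{1}{N}\sum_{i=1}^N \frac{N}{|K|}\,|\Omega_i \cap [0,\mathbf{x})| = \frac{1}{|K|}\left|\left(\bigcup_i \Omega_i\right) \cap [0,\mathbf{x})\right| = \frac{|K \cap [0,\mathbf{x})|}{|K|}.
\]
Hence the deterministic subtracted term equals the mean of the empirical term, so $\Delta(\mathbf{x}) = \frac{1}{N}\sum_{i=1}^N (Z_i(\mathbf{x}) - q_i(\mathbf{x}))$ has expectation zero for every $\mathbf{x}$. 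It then follows that $\mathbb{E}[\Delta(\mathbf{x})^2] = \Var\!\big(\frac{1}{N}\sum_i Z_i(\mathbf{x})\big)$, and independence turns this into $\frac{1}{N^2}\sum_i \Var(Z_i(\mathbf{x})) = \frac{1}{N^2}\sum_i q_i(\mathbf{x})(1-q_i(\mathbf{x}))$ via the Bernoulli variance formula. Integrating over $K$ against $d\mathbf{x}/|K|$ and interchanging the finite sum with the integral yields exactly \eqref{eq:originalequation}.

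The only genuinely delicate point is this centring identity: it is precisely the hypothesis $|\Omega_i| = |K|/N$ that forces $\frac{1}{N}\sum_i q_i(\mathbf{x})$ to coincide with the target measure $|K\cap[0,\mathbf{x})|/|K|$, so that the expectation of $\Delta(\mathbf{x})$ vanishes and $\mathbb{E}[\Delta(\mathbf{x})^2]$ collapses to a pure variance with no cross terms surviving. Everything else — the Bernoulli mean and variance computations, the cancellation of covariances by independence of the $\mathbf{X}_i$, and the Fubini interchange justified by boundedness — is routine once the centring is in place.
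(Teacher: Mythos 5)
Your proof is correct and takes essentially the same route as the source: the paper imports this proposition from \cite{MKFP22}, and the argument used there (and reproduced in this paper's proof of Proposition \ref{thm:projecteddiscrepancy}) is exactly your reduction of $\mathbb{E}\mathcal{L}_2^2(\mathcal{P}_{\boldsymbol{\Omega}})$, via Tonelli, to the integrated variance of a sum of independent Bernoulli indicators with parameters $q_i(\mathbf{x})$. Your centring identity $\frac{1}{N}\sum_i q_i(\mathbf{x}) = |K\cap[0,\mathbf{x})|/|K|$, forced by the equivolume hypothesis, is precisely the unbiasedness step (Proposition 1 of \cite{MKFP21}) on which that argument rests, so the two proofs coincide in substance.
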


The structure of the paper is as follows. In Section 2, we derive the formula for the discrepancy of jittered sets in dimension 2. This calculation is much simpler than for arbitrary $d\geq 3$ and is used to highlight some difficulties in the general case. In Section 3 we introduce several lemmas needed to derive the proof of Theorem \ref{thm:discforddimensions} in Section 4. Section 5 contains the proof of Theorem \ref{thm:hickernelldisc}.

\section{The special case $d=2$}

For $m\geq 2$, let $\mathbb{N}_m^2$ denote the set of all ordered pairs with entries from $\{1, \ldots, m\}$ and let $\boldsymbol{\Omega} = \{\Omega_{(i, j)} : (i, j) \in \mathbb{N}_m^2\}$ be a jittered partition of $[0,1]^2$. We define two subsets of the unit square which will be of importance to us in the following derivation. 

For a given vector $(i,j) \in \mathbb{N}_m^2$, define $$I_v := \left\{ \mathbf{x} =(x_1,x_2) \in [0,1]^2 : \frac{i-1}{m} \leq x_1 \leq \frac{i}{m} \text{ and } \frac{j}{m} \leq x_2 \leq 1 \right\}$$ and similarly, $$I_h := \left\{ \mathbf{x} =(x_1,x_2) \in [0,1]^2 : \frac{i}{m} \leq x_1 \leq 1 \text{ and } \frac{j-1}{m} \leq x_2 \leq \frac{j}{m} \right\}.$$

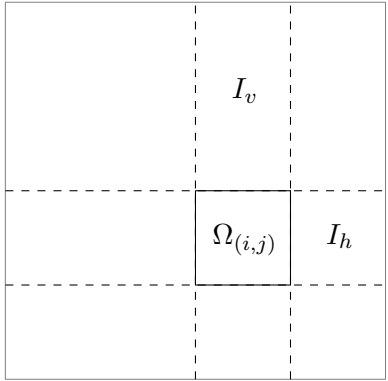
\begin{figure}
	
	\centering
	\begin{tikzpicture}[scale=1]
		\draw[step=5cm,gray,very thin] (0,0) grid (5,5);
		
		\draw[dashed] (2.5, 0) -- (2.5, 5);
		\draw[dashed] (3.75, 0) -- (3.75, 5);
		\draw[dashed] (0, 1.25) -- (5, 1.25);
		\draw[dashed] (0, 2.5) -- (5, 2.5);
		\draw (2.5, 2.5) -- (3.75, 2.5) -- (3.75, 1.25) -- (2.5, 1.25) -- (2.5, 2.5);
		
		\node at (3.15, 3.83) {$I_v$};
		\node at (4.4, 1.9) {$I_h$};
		\node at (3.15,1.85) {$\Omega_{(i, j)}$};
		
	\end{tikzpicture}
	
	\caption{The regions $I_h$ and $I_v$ for a given partition set $\Omega_{(i, j)}.$}
	\label{fig:regions}
\end{figure}

\begin{lemma}\label{lem:nonzerod2}
	Let $\boldsymbol{\Omega} =\{\Omega_{(i,j)} : (i, j) \in \mathbb{N}_m^2\}$ be a jittered partition of $[0,1]^2$ for $m \geq 2$ with $q_{(i,j)}(\mathbf{x}) = \frac{\left| \Omega_{(i,j)} \cap [0, \mathbf{x}) \right|}{\left| \Omega_{(i,j)} \right|}$. Then for each $(i,j) \in \mathbb{N}_m^2$,
	
	\begin{equation*}
		q_{(i,j)}(\mathbf{x}) \left( 1- q_{(i,j)}(\mathbf{x})\right) \neq 0 \text{ if and only if } \mathbf{x} \in \Omega_{(i,j)} \cup I_h \cup I_v
	\end{equation*}
\end{lemma}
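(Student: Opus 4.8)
The plan is to reduce the claim to a statement about when the anchored box $[0,\mathbf{x})$ covers the cell $\Omega_{(i,j)}$ partially---that is, neither emptily nor fully. Since the product $q_{(i,j)}(\mathbf{x})\bigl(1-q_{(i,j)}(\mathbf{x})\bigr)$ is nonzero precisely when $0 < q_{(i,j)}(\mathbf{x}) < 1$, it suffices to describe the region on which $q_{(i,j)}$ takes values strictly between $0$ and $1$ and to identify it with $\Omega_{(i,j)} \cup I_h \cup I_v$.

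First I would exploit the product structure of the two axis-aligned boxes. Since the intersection of $\Omega_{(i,j)}$ with $[0,x_1)\times[0,x_2)$ is again a box, $q_{(i,j)}$ factorises as $q_{(i,j)}(\mathbf{x}) = f_1(x_1)\,f_2(x_2)$, where $f_k$ is the fraction of the $k$-th edge of $\Omega_{(i,j)}$ covered by the corresponding edge of the anchored box. Explicitly $f_1(x_1) = m\cdot\bigl|[0,x_1)\cap[\tfrac{i-1}{m},\tfrac{i}{m})\bigr|$, and analogously for $f_2$ with $\tfrac{j-1}{m},\tfrac{j}{m}$; each $f_k$ takes values in $[0,1]$, vanishing for $x_k$ below the cell, rising linearly across it, and equalling $1$ once $x_k$ lies above it.

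With this factorisation the two conditions become elementary. On the one hand $q_{(i,j)}(\mathbf{x})>0$ holds exactly when both factors are positive, i.e. $x_1>\tfrac{i-1}{m}$ and $x_2>\tfrac{j-1}{m}$. On the other hand, as each factor is at most $1$, the product equals $1$ iff both factors do, so $q_{(i,j)}(\mathbf{x})<1$ holds exactly when $x_1<\tfrac{i}{m}$ or $x_2<\tfrac{j}{m}$. Intersecting these, the set $\{0<q_{(i,j)}<1\}$ is the quadrant $\{x_1>\tfrac{i-1}{m},\,x_2>\tfrac{j-1}{m}\}$ with the corner $\{x_1\ge\tfrac{i}{m},\,x_2\ge\tfrac{j}{m}\}$ removed. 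This L-shaped region splits into exactly three pieces: the cell $\tfrac{i-1}{m}<x_1<\tfrac{i}{m}$, $\tfrac{j-1}{m}<x_2<\tfrac{j}{m}$, which is $\Omega_{(i,j)}$; the strip $\tfrac{i-1}{m}<x_1<\tfrac{i}{m}$, $x_2\ge\tfrac{j}{m}$, which is $I_v$; and the strip $x_1\ge\tfrac{i}{m}$, $\tfrac{j-1}{m}<x_2<\tfrac{j}{m}$, which is $I_h$. This gives the asserted equivalence.

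The argument involves no genuinely hard step; it is careful geometric bookkeeping, and the only delicate point is the treatment of the shared grid lines $x_k=\tfrac{i}{m},\tfrac{j}{m}$, where the strict versus nonstrict inequalities in the definitions of $I_h$ and $I_v$ disagree with the open region above by a set of Lebesgue measure zero. Since Lemma~\ref{lem:nonzerod2} is used only inside the integral in \eqref{eq:originalequation}, these boundary discrepancies are immaterial; if a pointwise equivalence is wanted, one simply records that $q_{(i,j)}$ vanishes on the left and bottom edges of $\Omega_{(i,j)}$ and equals $1$ on the top-right corner, and assigns the grid boundary accordingly.
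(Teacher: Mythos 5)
Your proof is correct, and its skeleton matches the paper's: both reduce the claim to characterising where $0 < q_{(i,j)}(\mathbf{x}) < 1$, i.e., where the anchored box $[0,\mathbf{x})$ covers the cell $\Omega_{(i,j)}$ partially. The difference lies in how much of that characterisation is actually proved. The paper's proof verifies only the easy implications (disjointness gives $q_{(i,j)}=0$, containment gives $q_{(i,j)}=1$) and then asserts the geometric identification of the partial-overlap region with $\Omega_{(i,j)}\cup I_h\cup I_v$ via an ``i.e.''; you prove that identification, using the factorisation $q_{(i,j)}(\mathbf{x})=f_1(x_1)f_2(x_2)$ into one-dimensional coverage fractions, which turns the conditions $q_{(i,j)}>0$ and $q_{(i,j)}<1$ into coordinate inequalities and yields the explicit decomposition of the resulting L-shaped set into the cell, $I_v$ and $I_h$. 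Your argument also exposes a subtlety the paper ignores: with the closed definitions of $I_h$, $I_v$ and the cell, the stated equivalence fails on a Lebesgue-null set (for instance $q_{(i,j)}=1$ at points of $I_v$ with $x_1=\tfrac{i}{m}$, and $q_{(i,j)}=0$ on the lower and left edges of $\Omega_{(i,j)}$, so the product vanishes at points that lie in the union). You correctly note that this is harmless for the lemma's only use, inside the integral in \eqref{eq:originalequation}. In short: same strategy, but your write-up supplies the step the paper leaves as an assertion and is more careful about boundary behaviour.
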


\begin{proof}
	We prove the contrapositive statement. 
	If $\Omega_{(i,j)} \subset [0, \mathbf{x})$ then $\left| \Omega_{(i,j)} \cap [0, \mathbf{x}) \right| = |\Omega_{(i,j)}|$. 
	Hence, $q_{(i,j)} = 1$ and $q_{(i,j)}(\mathbf{x}) \left( 1- q_{(i,j)}(\mathbf{x})\right) = 0$. 
	Similarly, if $\Omega_{(i,j)} \cap [0, \mathbf{x}) = \emptyset$ then $\left| \Omega_{(i,j)} \cap [0, \mathbf{x}) \right| = 0$. 
	Hence, $q_{(i,j)} = 0$ and $q_{(i,j)}(\mathbf{x}) \left( 1- q_{(i,j)}(\mathbf{x})\right) = 0$. 
	Therefore, the integrand is non-zero if and only if $\Omega_{(i,j)}$ and $[0, \mathbf{x})$ have a nontrivial intersection and $\Omega_{(i,j)} \not\subseteq [0, \mathbf{x})$, i.e., when $\mathbf{x} \in \Omega_{(i,j)} \cup I_h \cup I_v$.
\end{proof}
We illustrate the regions for which $q_{(i,j)}(\mathbf{x}) \left( 1- q_{(i,j)}(\mathbf{x})\right) \neq 0$ in Figure \ref{fig:regions}.

\begin{lemma}\label{lem:qcases2d}
	For given $(i,j) \in \mathbb{N}_m^2$, we have the following
	\begin{equation} \label{eq:qcases2d}
		q_{(i,j)}(\mathbf{x}) = 
		\begin{cases}
			m^2 \left( x_1 - \frac{i-1}{m} \right) \left( x_2 - \frac{j-1}{m} \right), & \text{for } \mathbf{x} \in \Omega_{(i,j)} \\
			m \left( x_2 - \frac{j-1}{m} \right), & \text{for } \mathbf{x} \in I_h \\
			m \left( x_1 - \frac{i-1}{m} \right), & \text{for } \mathbf{x} \in I_v
		\end{cases}
	\end{equation}
\end{lemma}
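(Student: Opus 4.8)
The plan is to compute the area $|\Omega_{(i,j)} \cap [0,\mathbf{x})|$ directly in each of the three regions singled out by Lemma \ref{lem:nonzerod2}, and then divide by $|\Omega_{(i,j)}|$. Recall first that $\Omega_{(i,j)}$ is the axis-aligned box $\left[\frac{i-1}{m}, \frac{i}{m}\right) \times \left[\frac{j-1}{m}, \frac{j}{m}\right)$ of side length $1/m$, so that $|\Omega_{(i,j)}| = 1/m^2$. Since $[0,\mathbf{x}) = [0, x_1) \times [0, x_2)$ is also a product of intervals, the intersection $\Omega_{(i,j)} \cap [0,\mathbf{x})$ is again an axis-aligned rectangle, and its area factors as the product of the lengths captured in each coordinate separately.

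The core observation I would record is how much of the cell each coordinate of $\mathbf{x}$ captures. In the $x_1$-direction: if $\frac{i-1}{m} \leq x_1 \leq \frac{i}{m}$, then $[0,x_1)$ truncates the cell and captures the subinterval $\left[\frac{i-1}{m}, x_1\right)$ of length $x_1 - \frac{i-1}{m}$; whereas if $x_1 \geq \frac{i}{m}$, then $[0,x_1)$ already contains the full width $1/m$ of the cell. The identical dichotomy holds in the $x_2$-direction with $x_2$ and $j$ in place of $x_1$ and $i$. Multiplying the captured lengths across the two coordinates yields $|\Omega_{(i,j)} \cap [0,\mathbf{x})|$, and dividing by $1/m^2$ yields $q_{(i,j)}(\mathbf{x})$.

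It then remains only to read off the three cases. When $\mathbf{x} \in \Omega_{(i,j)}$ both coordinates lie in their partial range, so the area is $\left(x_1 - \frac{i-1}{m}\right)\left(x_2 - \frac{j-1}{m}\right)$ and $q_{(i,j)}(\mathbf{x}) = m^2\left(x_1 - \frac{i-1}{m}\right)\left(x_2 - \frac{j-1}{m}\right)$. When $\mathbf{x} \in I_h$ we have $x_1 \geq \frac{i}{m}$, so the full width $1/m$ is captured in the first coordinate while the second is partial, giving area $\frac{1}{m}\left(x_2 - \frac{j-1}{m}\right)$ and hence $q_{(i,j)}(\mathbf{x}) = m\left(x_2 - \frac{j-1}{m}\right)$; the case $\mathbf{x} \in I_v$ follows by the symmetric argument with the roles of the two coordinates interchanged. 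There is no genuine obstacle here: the only thing to be careful about is correctly identifying, in each region, which coordinate is fully captured and which is truncated, which is precisely what the definitions of $I_h$ and $I_v$ encode.
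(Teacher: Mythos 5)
Your proposal is correct and follows essentially the same approach as the paper: compute $\left| \Omega_{(i,j)} \cap [0,\mathbf{x}) \right|$ coordinate-wise as a product of captured interval lengths and divide by $|\Omega_{(i,j)}| = 1/m^2$. In fact you are more thorough than the paper, which works out only the case $\mathbf{x} \in \Omega_{(i,j)}$ and declares the $I_h$, $I_v$ cases similar, whereas you make the full-width-versus-truncated dichotomy explicit in all three regions.
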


\begin{proof}
	We discuss the case when $\mathbf{x} \in \Omega_{(i, j)}$ and note that the other cases can be derived in a similar manner. If $\mathbf{x} \in \Omega_{(i, j)}$, then by definition $\frac{i-1}{m} \leq x_1 \leq \frac{i}{m}$ and $\frac{j-1}{m} \leq x_2 \leq \frac{j}{m}$. Then recall that $$q_{(i, j)}(\mathbf{x}) = \frac{|\Omega_{(i, j)} \cap [0, \mathbf{x})|}{|\Omega_{(i, j)}|},$$ therefore 
	\begin{eqnarray}
		q_{(i, j)}(\mathbf{x}) = \frac{|\Omega_{(i, j)} \cap [0, \mathbf{x})|}{|\Omega_{(i, j)}|} 
		 = m^2 \left( x_1 - \frac{i-1}{m} \right) \left( x_2 - \frac{j-1}{m} \right) \nonumber
	\end{eqnarray}
	as required.
\end{proof}

\begin{theorem}
	Let $\boldsymbol{\Omega} = \{ \Omega_{\boldsymbol{\mathit{i}}} : \boldsymbol{\mathit{i}} \in \mathbb{N}_m^2\}$ be a jittered partition of $[0, 1]^2$ for $m \geq 2$, then 
	\begin{equation*}
		\mathbb{E}\mathcal{L}_2^2(\mathcal{P}_{\boldsymbol{\Omega}}) = \frac{6m-1}{36m^4}.
	\end{equation*}
\end{theorem}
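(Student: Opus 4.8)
The plan is to apply Proposition~3, i.e.\ equation~\eqref{eq:originalequation}, with $K=[0,1]^2$, so that $|K|=1$ and $N=m^2$. This reduces the problem to evaluating the double sum $\sum_{(i,j)\in\mathbb{N}_m^2}\int_{[0,1]^2} q_{(i,j)}(\mathbf{x})\bigl(1-q_{(i,j)}(\mathbf{x})\bigr)\,d\mathbf{x}$ and then dividing by $N^2=m^4$. The crucial simplification comes from Lemma~\ref{lem:nonzerod2}: for each fixed $(i,j)$ the integrand vanishes outside $\Omega_{(i,j)}\cup I_h\cup I_v$, so each of the $m^2$ integrals splits into exactly three pieces over these (up to measure zero) pairwise disjoint regions.

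On each piece I would substitute the explicit expressions from Lemma~\ref{lem:qcases2d} and rescale. On $\Omega_{(i,j)}$, the change of variables $u=m(x_1-\tfrac{i-1}{m})$, $v=m(x_2-\tfrac{j-1}{m})$ maps the box to $[0,1]^2$ with Jacobian $m^{-2}$ and turns $q_{(i,j)}$ into $uv$, giving $\frac{1}{m^2}\int_0^1\!\int_0^1 uv(1-uv)\,du\,dv=\frac{1}{m^2}\bigl(\frac14-\frac19\bigr)=\frac{5}{36m^2}$, independent of $(i,j)$. On $I_h$ the function $q_{(i,j)}$ depends only on $x_2$, so the $x_1$-integral merely contributes the strip length $1-\tfrac{i}{m}=\tfrac{m-i}{m}$, while the $x_2$-integral equals $\frac{1}{m}\int_0^1 v(1-v)\,dv=\frac{1}{6m}$; hence the $I_h$-contribution is $\frac{m-i}{6m^2}$. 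The region $I_v$ is handled symmetrically and contributes $\frac{m-j}{6m^2}$.

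It then remains to sum over $(i,j)\in\mathbb{N}_m^2$. The $\Omega$-pieces give $m^2\cdot\frac{5}{36m^2}=\frac{5}{36}$. For the $I_h$-pieces, summing $\frac{m-i}{6m^2}$ over $j$ (a factor $m$) and over $i$ via $\sum_{i=1}^m(m-i)=\frac{m(m-1)}{2}$ yields $\frac{m-1}{12}$, and the $I_v$-pieces give the same by symmetry. Adding the three contributions gives $\frac{5}{36}+\frac{m-1}{6}=\frac{6m-1}{36}$, and dividing by $N^2=m^4$ produces the claimed value $\frac{6m-1}{36m^4}$.

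I expect no serious obstacle here: the $d=2$ computation is elementary once Lemmas~\ref{lem:nonzerod2} and~\ref{lem:qcases2d} are in hand. The only points demanding care are the geometric bookkeeping—checking that $\Omega_{(i,j)}$, $I_h$, $I_v$ overlap only on sets of measure zero so nothing is double-counted—and correctly tracking the index-dependent strip-length factors $\frac{m-i}{m}$ and $\frac{m-j}{m}$, including the degenerate boundary cases $i=m$ or $j=m$ where a strip vanishes. It is precisely these index-dependent length factors that proliferate across faces of every intermediate dimension and become the main source of difficulty in the general $d\ge 3$ case.
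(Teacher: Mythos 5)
Your proposal is correct and follows essentially the same route as the paper: apply equation \eqref{eq:originalequation} with $K=[0,1]^2$, $N=m^2$, restrict each integral via Lemma \ref{lem:nonzerod2} to the three regions $\Omega_{(i,j)}$, $I_h$, $I_v$, substitute the expressions of Lemma \ref{lem:qcases2d}, and sum the resulting contributions $\frac{5}{36m^2}$, $\frac{m-i}{6m^2}$, $\frac{m-j}{6m^2}$ over $(i,j)\in\mathbb{N}_m^2$. Your intermediate values and the final summation agree exactly with the paper's computation.
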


\begin{proof}
To calculate the mean $\mathcal{L}_2-$discrepancy of two dimensional jittered sampling, we start from equation \eqref{eq:originalequation}. Let $K = [0,1]^2$, $N=m^2$ and assign a position vector $(i,j)$ with $1 \leq i, j \leq m$ to the set of the partition with lower left vertex $(i, j)$. Using this notation and Lemma \ref{lem:nonzerod2} we obtain 
	 \begin{equation}\label{eq:2dimensions}
	 	\mathbb{E}\mathcal{L}_2^2(\mathcal{P}_{\boldsymbol{\Omega}}) = \frac{1}{m^{2d}} \sum_{(i,j) \in \mathbb{N}_m^2} \int_{J} q_{(i,j)} (\mathbf{x}) \left( 1 - q_{(i,j)} (\mathbf{x}) \right) d\mathbf{x}
	\end{equation}
 	with $q_{(i,j)} (\mathbf{x}) = \frac{\left| \Omega_{(i,j)} \cap [0, \mathbf{x}) \right|}{\left| \Omega_{(i,j)} \right|}$ and $J = \Omega_{(i, j)} \cup I_h \cup I_v$. For a given set in position $(i,j)$ we can evaluate the integral:

	\begin{eqnarray}
		\int_{J} q_{(i,j)} (\mathbf{x}) \left( 1 - q_{(i,j)} (\mathbf{x}) \right) d\mathbf{x} &=& \int_{\frac{i-1}{m}}^{\frac{i}{m}} \int_{\frac{j-1}{m}}^{\frac{j}{m}} q_{(i,j)} (\mathbf{x}) \left( 1 - q_{(i,j)}(\mathbf{x}) \right) dx_2x_1 \nonumber \\ &+&  \int_{\frac{i}{m}}^{1} \int_{\frac{j-1}{m}}^{\frac{j}{m}} q_{(i,j)} (\mathbf{x}) \left( 1 - q_{(i,j)}(\mathbf{x})  \right) dx_2x_1 \nonumber \\ &+& \int_{\frac{i-1}{m}}^{\frac{i}{m}} \int_{\frac{j}{m}}^{1} q_{(i,j)} (\mathbf{x}) \left( 1 - q_{(i,j)} (\mathbf{x}) \right) dx_2x_1. \nonumber 
	\end{eqnarray}
	
Evaluating the integrals and using \eqref{eq:2dimensions} gives,
	
	\begin{eqnarray}
		\mathbb{E}\mathcal{L}_2^2(\mathcal{P}_{\boldsymbol{\Omega}}) &=& \frac{1}{m^4} \sum_{\boldsymbol{\mathit{i}} \in \mathbb{N}_m^2} \frac{5}{36m^2} + \frac{m-i}{6m^2} + \frac{m-j}{6m^2} \nonumber \\ &=& \frac{1}{m^4} \sum_{i, j=1}^m \frac{5}{36m^2} + \frac{m-i}{6m^2} + \frac{m-j}{6m^2} \nonumber \\ &=& \frac{6m-1}{36m^4}. \nonumber
	\end{eqnarray}	
	as required.
\end{proof}

\begin{corollary}
	For a jittered partition $\boldsymbol{\Omega}$ of $[0,1]^2$, $$\mathbb{E}\mathcal{L}_2(\mathcal{P}_{\boldsymbol{\Omega}}) = \Theta \left( N^{-3/4} \right).$$
\end{corollary}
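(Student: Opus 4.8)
The plan is to read off the asymptotics directly from the exact identity $\mathbb{E}\mathcal{L}_2^2(\mathcal{P}_{\boldsymbol{\Omega}}) = \frac{6m-1}{36m^4}$ just established, and then to transfer the estimate from $m$ to $N$ using the relation $N = m^2$ that holds in dimension $d=2$. No new structural input is needed; the corollary is an immediate consequence of the preceding theorem.

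First I would isolate the order of magnitude in $m$. Since $m \geq 2$ we have $5m \leq 6m-1 \leq 6m$, and hence
\[
\frac{5}{36}\,m^{-3} \;\leq\; \mathbb{E}\mathcal{L}_2^2(\mathcal{P}_{\boldsymbol{\Omega}}) \;\leq\; \frac{1}{6}\,m^{-3},
\]
so that $\mathbb{E}\mathcal{L}_2^2(\mathcal{P}_{\boldsymbol{\Omega}}) = \Theta(m^{-3})$. Because both bounding constants are strictly positive, this two-sided estimate is preserved under taking square roots, giving $\mathbb{E}\mathcal{L}_2(\mathcal{P}_{\boldsymbol{\Omega}}) = \Theta(m^{-3/2})$; here, exactly as in the proof of the general-dimensional corollary, $\mathbb{E}\mathcal{L}_2$ is read as the square root of the mean square $\mathcal{L}_2$-discrepancy. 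Finally I would substitute $m = N^{1/2}$, which is legitimate since $N = m^2$ when $d=2$, obtaining $m^{-3/2} = N^{-3/4}$ and therefore $\mathbb{E}\mathcal{L}_2(\mathcal{P}_{\boldsymbol{\Omega}}) = \Theta(N^{-3/4})$.

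There is essentially no obstacle to overcome: the only point requiring a moment's care is that the implied constants in the $\Theta$-estimate remain positive and bounded away from zero, so that passing to the square root does not degrade the two-sided bound. As a sanity check one may also note that this is precisely the $d=2$ specialization of the general-dimensional corollary, whose exponent $N^{-1/2 - 1/(2d)}$ returns $N^{-3/4}$ at $d=2$.
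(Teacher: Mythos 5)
Your proposal is correct and follows the same route as the paper, whose entire proof is the remark that the claim ``follows from $N=m^2$'': both read the order $\Theta(m^{-3})$ off the exact formula $\frac{6m-1}{36m^4}$, take square roots under the paper's convention that $\mathbb{E}\mathcal{L}_2$ denotes the root of the mean squared discrepancy, and substitute $m=N^{1/2}$. Your version merely makes the implicit constants and the square-root step explicit, which is a faithful elaboration rather than a different argument.
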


\begin{proof}
This follows from $N=m^2$.
\end{proof}

\section{Several key lemmas}

For $m \geq 2$, let $\mathbb{N}_m^d$ denote the set of all $d-$tuples with entries from $\{1, \ldots, m\}$. Let $\boldsymbol{\Omega} = \{ \Omega_{\boldsymbol{\mathit{i}}} : \boldsymbol{\mathit{i}} = (i_1, \ldots, i_d) \in \mathbb{N}_m^d \}$ denote a jittered partition of $[0, 1]^d$ with $\boldsymbol{\mathit{i}} \in \mathbb{N}_m^d$ acting as the position vector of the jittered set $\Omega_{\boldsymbol{\mathit{i}}}$. If $u$ is a nonempty \emph{strict}, subset of $\{1:d\} = \{1, 2, \ldots, d\}$, then for every $\boldsymbol{\mathit{i}} \in \mathbb{N}_m^d$ we can define the set

\begin{equation}\label{eq:defofI}
	\mathcal{I}^u_{\boldsymbol{\mathit{i}}} = \left\{ \mathbf{x} \in [0,1]^d : \frac{i_j-1}{m} \leq x_j \leq \frac{i_j}{m} \text{ for } j \notin u, \frac{i_k}{m} \leq x_k \leq 1 \text{ for } k \in u \right\}.
\end{equation}

Note that there are $2^d-2$ such subsets for each $\boldsymbol{\mathit{i}} \in \mathbb{N}_m^d$. There are $d-1$ different types of subsets $\mathcal{I}^u_{\boldsymbol{\mathit{i}}}$ which can be classified by the cardinality of the set $u$, denoted throughout by the usual $|u|$. It is therefore natural to say that a set is of \textit{type}$-|u|$. See Figure \ref{fig:regions} for visual aid and examples of the regions $\mathcal{I}^u_{\boldsymbol{\mathit{i}}}$ for $d=2$. Specifically, $I_v$ can be identified with $\mathcal{I}^{\{2\}}_{\boldsymbol{\mathit{i}}}$ and similarly $I_h$ with $\mathcal{I}^{\{1\}}_{\boldsymbol{\mathit{i}}}$ in the two dimensional case.

\begin{lemma}\label{lem:regionsddimensions}
	Let $\boldsymbol{\Omega} =\{\Omega_{\boldsymbol{\mathit{i}}} : \boldsymbol{\mathit{i}} \in \mathbb{N}_m^d\}$ be a jittered partition of $[0,1]^d$ for $m \geq 2$ with $q_{\boldsymbol{\mathit{i}}}(\mathbf{x}) = \frac{\left| \Omega_{\boldsymbol{\mathit{i}}} \cap [0, \mathbf{x}) \right|}{\left| \Omega_{\boldsymbol{\mathit{i}}} \right|}$. Then for each $\boldsymbol{\mathit{i}} \in \mathbb{N}_m^d$,
	\begin{equation*}
		q_{\boldsymbol{\mathit{i}}}(\mathbf{x}) \left( 1- q_{\boldsymbol{\mathit{i}}}(\mathbf{x})\right) \neq 0 \text{ if and only if } \mathbf{x} \in \Omega_{\boldsymbol{\mathit{i}}} \cup \bigcup \mathcal{I}^u_{\boldsymbol{\mathit{i}}}
	\end{equation*}
in which the union is over all subsets $\mathcal{I}^u_{\boldsymbol{\mathit{i}}}$ as defined in \eqref{eq:defofI} for $\emptyset \neq u \subset \{1:d\}$.
\end{lemma}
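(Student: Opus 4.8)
The plan is to mirror the contrapositive argument of Lemma \ref{lem:nonzerod2}, exploiting the product structure of both $\Omega_{\boldsymbol{\mathit{i}}}$ and the test box $[0,\mathbf{x})$. Since $\Omega_{\boldsymbol{\mathit{i}}} = \prod_{j=1}^d [\frac{i_j-1}{m}, \frac{i_j}{m}]$ and $[0,\mathbf{x}) = \prod_{j=1}^d [0, x_j)$, the intersection measure factorises as
\[
\left| \Omega_{\boldsymbol{\mathit{i}}} \cap [0,\mathbf{x}) \right| = \prod_{j=1}^d \left| \left[\tfrac{i_j-1}{m}, \tfrac{i_j}{m}\right] \cap [0, x_j) \right|,
\]
so that $q_{\boldsymbol{\mathit{i}}}(\mathbf{x}) = \prod_{j=1}^d \ell_j(x_j)$, where each factor $\ell_j(x_j) := m \cdot |[\frac{i_j-1}{m}, \frac{i_j}{m}] \cap [0, x_j)| \in [0,1]$ equals $0$ when $x_j \leq \frac{i_j-1}{m}$, equals $1$ when $x_j \geq \frac{i_j}{m}$, and lies strictly between these values otherwise.

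First I would observe that $q_{\boldsymbol{\mathit{i}}}(\mathbf{x})(1 - q_{\boldsymbol{\mathit{i}}}(\mathbf{x})) \neq 0$ if and only if $0 < q_{\boldsymbol{\mathit{i}}}(\mathbf{x}) < 1$. From the factorisation, $q_{\boldsymbol{\mathit{i}}}(\mathbf{x}) = 0$ precisely when some factor vanishes, i.e. when $x_j \leq \frac{i_j-1}{m}$ for some $j$ (equivalently $\Omega_{\boldsymbol{\mathit{i}}} \cap [0,\mathbf{x}) = \emptyset$), while $q_{\boldsymbol{\mathit{i}}}(\mathbf{x}) = 1$ precisely when every factor equals $1$, i.e. when $x_j \geq \frac{i_j}{m}$ for all $j$ (equivalently $\Omega_{\boldsymbol{\mathit{i}}} \subseteq [0,\mathbf{x})$). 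Hence the integrand is nonzero exactly on the set of $\mathbf{x}$ for which $x_j > \frac{i_j-1}{m}$ for every $j$ while $x_k < \frac{i_k}{m}$ for at least one $k$.

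It then remains to identify this set with $\Omega_{\boldsymbol{\mathit{i}}} \cup \bigcup_u \mathcal{I}^u_{\boldsymbol{\mathit{i}}}$. The idea is to classify each coordinate as either \emph{partial} (when $\frac{i_j-1}{m} \leq x_j \leq \frac{i_j}{m}$) or \emph{above} (when $x_k \geq \frac{i_k}{m}$), and to collect the ``above'' indices into a set $u \subseteq \{1:d\}$. The condition that no factor vanish forbids any coordinate from lying strictly below the box, while the condition that some factor be smaller than $1$ forces at least one coordinate to be partial, so $u$ is a \emph{strict} subset. When $u = \emptyset$ all coordinates are partial and $\mathbf{x} \in \Omega_{\boldsymbol{\mathit{i}}}$; when $u$ is a nonempty strict subset the membership conditions are exactly those defining $\mathcal{I}^u_{\boldsymbol{\mathit{i}}}$ in \eqref{eq:defofI}. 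The converse inclusion follows by reading the same case analysis backwards, which also recovers the count of $2^d - 2$ regions of type $\mathcal{I}^u_{\boldsymbol{\mathit{i}}}$.

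The only genuine subtlety, and the step I would treat with most care, is the behaviour on the shared boundary faces where the closed regions $\Omega_{\boldsymbol{\mathit{i}}}$ and $\mathcal{I}^u_{\boldsymbol{\mathit{i}}}$ meet and where a partial factor may degenerate to $0$ or $1$. These faces form a set of Lebesgue measure zero, so the characterisation holds up to a null set, which is immaterial for the integral in \eqref{eq:originalequation} that this lemma feeds into, exactly as in the $d=2$ case. I would therefore phrase the argument through the contrapositive on the ``empty'' and ``full'' cases, which sidesteps the boundary bookkeeping while delivering precisely the region $\Omega_{\boldsymbol{\mathit{i}}} \cup \bigcup_u \mathcal{I}^u_{\boldsymbol{\mathit{i}}}$ required downstream.
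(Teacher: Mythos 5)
Your proof is correct and takes essentially the same route as the paper: both argue via the contrapositive, reducing the claim to the observation that $q_{\boldsymbol{\mathit{i}}}(\mathbf{x})\left(1-q_{\boldsymbol{\mathit{i}}}(\mathbf{x})\right)=0$ exactly when $q_{\boldsymbol{\mathit{i}}}(\mathbf{x})=0$ (some $x_j \leq \frac{i_j-1}{m}$, so $\Omega_{\boldsymbol{\mathit{i}}} \cap [0,\mathbf{x})=\emptyset$) or $q_{\boldsymbol{\mathit{i}}}(\mathbf{x})=1$ (all $x_j \geq \frac{i_j}{m}$, so $\Omega_{\boldsymbol{\mathit{i}}} \subseteq [0,\mathbf{x})$), and then matching the complementary coordinate conditions with the regions $\Omega_{\boldsymbol{\mathit{i}}}$ and $\mathcal{I}^u_{\boldsymbol{\mathit{i}}}$. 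Your explicit one-dimensional factorisation of $q_{\boldsymbol{\mathit{i}}}$ and your remark that the equivalence only holds up to Lebesgue-null boundary faces (which is all the downstream integral requires) are, if anything, slightly more careful than the paper's own write-up, but the underlying argument is identical.
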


\begin{proof}
	It is more convenient to prove this via the contrapositive statement. That is, 
	\begin{equation*}
		q_{\boldsymbol{\mathit{i}}}(\mathbf{x}) \left( 1- q_{\boldsymbol{\mathit{i}}}(\mathbf{x})\right) = 0 \text{ if and only if } \mathbf{x} \notin \Omega_{\boldsymbol{\mathit{i}}} \cup \bigcup \mathcal{I}^u_{\boldsymbol{\mathit{i}}}.
	\end{equation*}
	
Hence, given $\boldsymbol{\mathit{i}} \in \mathbb{N}_m^d$ suppose that $\mathbf{x} \notin \Omega_{\boldsymbol{\mathit{i}}} \cup \bigcup \mathcal{I}^u_{\boldsymbol{\mathit{i}}}$ for any nonempty choice of $u \subset \{1:d\}$. 
Then by definition, we are either in the situation that: there exists at least one $j \in \{1:d\}$ such that $x_j \leq \frac{i_j - 1}{m}$ which implies $\Omega_{\boldsymbol{\mathit{i}}} \cap [0, \mathbf{x}) = \emptyset$. Hence, $$q_{\boldsymbol{\mathit{i}}}(\mathbf{x}) = \frac{|\Omega_{\boldsymbol{\mathit{i}}} \cap [0, \mathbf{x})|}{|\Omega_{\boldsymbol{\mathit{i}}}|} = 0.$$ Or we have that $x_j \geq \frac{i_j}{m}$ for all $j \in \{1:d\}$ which implies $\Omega_{\boldsymbol{\mathit{i}}} \subseteq [0, \mathbf{x})$. 
In this case $$q_{\boldsymbol{\mathit{i}}}(\mathbf{x}) = \frac{|\Omega_{\boldsymbol{\mathit{i}}} \cap [0, \mathbf{x})|}{|\Omega_{\boldsymbol{\mathit{i}}}|} = \frac{ |\Omega_{\boldsymbol{\mathit{i}}}|}{|\Omega_{\boldsymbol{\mathit{i}}}|} = 1.$$
	
In both cases, $q_{\boldsymbol{\mathit{i}}}(\mathbf{x}) \left( 1- q_{\boldsymbol{\mathit{i}}}(\mathbf{x})\right) = 0$ as required. 
In the other direction, suppose, $q_{\boldsymbol{\mathit{i}}}(\mathbf{x}) \left( 1- q_{\boldsymbol{\mathit{i}}}(\mathbf{x})\right) = 0$.
Then either $q_{\boldsymbol{\mathit{i}}}(\mathbf{x}) = 0 \Rightarrow |\Omega_{\boldsymbol{\mathit{i}}} \cap [0, \mathbf{x})| = 0 \Rightarrow \Omega_{\boldsymbol{\mathit{i}}} \cap [0, \mathbf{x}) = \emptyset$, and therefore $\mathbf{x} \notin \Omega_{\boldsymbol{\mathit{i}}} \cup \bigcup \mathcal{I}_{\boldsymbol{\mathit{i}}}^u$. 
Alternatively if $q_{\boldsymbol{\mathit{i}}}(\mathbf{x}) = 1$, then $|\Omega_{\boldsymbol{\mathit{i}}} \cap [0, \mathbf{x})| = |\Omega_{\boldsymbol{\mathit{i}}}| \Rightarrow \Omega_{\boldsymbol{\mathit{i}}} \subset [0, \mathbf{x}) \Rightarrow \mathbf{x} \notin \Omega_{\boldsymbol{\mathit{i}}} \cup \mathcal{I}_{\boldsymbol{\mathit{i}}}^u$ for any nonempty $u \subset \{1:d\}$ since $\frac{i_j}{m} \leq x_j \leq 1$ for all $j \in \{1:d\}$. 
The statement of the Lemma is now proved.
\end{proof}

We can now adapt equation \eqref{eq:originalequation}.
Let $K = [0,1]^d$ and $N=m^d$, then
	\begin{equation} \label{eq:bequationddimension}
		\mathbb{E}\mathcal{L}_2^2(\mathcal{P}_{\boldsymbol{\Omega}}) = \frac{1}{(m^d)^2} \sum_{\boldsymbol{\mathit{i}} \in \mathbb{N}_m^d} \int_{[0,1]^d} q_{\boldsymbol{\mathit{i}}} (\mathbf{x}) \left( 1 - q_{\boldsymbol{\mathit{i}}}(\mathbf{x}) \right) d\mathbf{x}
	\end{equation}
	where $q_{\boldsymbol{\mathit{i}}}(\mathbf{x}) = \frac{\left| \Omega_{\boldsymbol{\mathit{i}}} \cap [0, \mathbf{x}) \right|}{\left| \Omega_{\boldsymbol{\mathit{i}}} \right|}.$ 
Next, we discard those regions of $[0,1]^d$ where the integrand vanishes as shown in Lemma \ref{lem:regionsddimensions}. 
Therefore, we get, 
\begin{equation}\label{eq:simplifiedddimensions}
\mathbb{E}\mathcal{L}_2^2(\mathcal{P}_{\boldsymbol{\Omega}}) = \frac{1}{m^{2d}} \sum_{\boldsymbol{\mathit{i}} \in \mathbb{N}_m^d} \left[ \int_{\Omega_{\boldsymbol{\mathit{i}}}} q_{\boldsymbol{\mathit{i}}} (\mathbf{x}) \left( 1 - q_{\boldsymbol{\mathit{i}}}(\mathbf{x}) \right) d\mathbf{x} + \sum_{ \emptyset \neq u \subset \{1:d\}} \int_{\mathcal{I}^u_{\boldsymbol{\mathit{i}}}} q_{\boldsymbol{\mathit{i}}} (\mathbf{x}) \left( 1 - q_{\boldsymbol{\mathit{i}}}(\mathbf{x}) \right) d\mathbf{x} \right].
\end{equation}

\begin{lemma}\label{lem:qcasesddimensions}
	For a given $\boldsymbol{\mathit{i}} \in \mathbb{N}_m^d$
	\begin{equation} \label{eq:qcasesddimensions}
		q_{\boldsymbol{\mathit{i}}}(\mathbf{x}) = 
		\begin{cases}
			m^d \prod_{j=1}^{d} \left( x_j - \frac{i_j-1}{m} \right), & \text{for } \mathbf{x} \in \Omega_{\boldsymbol{\mathit{i}}} \\
			
			m^{d-|u|} \prod_{j \notin u} \left( x_j - \frac{i_j-1}{m} \right), & \text{for } \mathbf{x} \in \mathcal{I}_{\boldsymbol{\mathit{i}}}^u.
		\end{cases}
	\end{equation}
\end{lemma}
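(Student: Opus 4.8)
The plan is to compute the intersection volume $|\Omega_{\boldsymbol{\mathit{i}}} \cap [0,\mathbf{x})|$ directly, exploiting the fact that both the cell $\Omega_{\boldsymbol{\mathit{i}}} = \prod_{j=1}^{d} [\frac{i_j-1}{m}, \frac{i_j}{m})$ and the test box $[0,\mathbf{x}) = \prod_{j=1}^{d}[0,x_j)$ are axis-aligned products of intervals, so their intersection factorises across coordinates. Since $|\Omega_{\boldsymbol{\mathit{i}}}| = m^{-d}$, the claim reduces to identifying, coordinate by coordinate, the length of $[\frac{i_j-1}{m}, \frac{i_j}{m}) \cap [0, x_j)$ and taking the product, then dividing by $m^{-d}$.

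First I would treat the case $\mathbf{x} \in \Omega_{\boldsymbol{\mathit{i}}}$. Here $\frac{i_j-1}{m} \le x_j \le \frac{i_j}{m}$ for every $j$, so in each coordinate the overlap is $[\frac{i_j-1}{m}, x_j)$ of length $x_j - \frac{i_j-1}{m}$; multiplying these lengths gives $|\Omega_{\boldsymbol{\mathit{i}}} \cap [0,\mathbf{x})| = \prod_{j=1}^{d} (x_j - \frac{i_j-1}{m})$, and dividing by $m^{-d}$ yields the first line. This is precisely the $d$-dimensional analogue of Lemma \ref{lem:qcases2d} and proceeds by the same argument.

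For the case $\mathbf{x} \in \mathcal{I}_{\boldsymbol{\mathit{i}}}^u$, I would split the coordinates according to the definition \eqref{eq:defofI}. For $k \in u$ we have $x_k \ge \frac{i_k}{m}$, so the whole interval $[\frac{i_k-1}{m}, \frac{i_k}{m})$ lies inside $[0, x_k)$ and contributes its full length $\frac{1}{m}$; for $j \notin u$ we have $\frac{i_j-1}{m} \le x_j \le \frac{i_j}{m}$, contributing the partial length $x_j - \frac{i_j-1}{m}$ exactly as before. Taking the product over all coordinates gives $|\Omega_{\boldsymbol{\mathit{i}}} \cap [0,\mathbf{x})| = m^{-|u|} \prod_{j \notin u} (x_j - \frac{i_j-1}{m})$, and dividing by $|\Omega_{\boldsymbol{\mathit{i}}}| = m^{-d}$ produces the factor $m^{d-|u|}$ out front, establishing the second line.

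There is no genuine obstacle here: the only thing to keep straight is the bookkeeping of which coordinates are \emph{saturated} (those $k \in u$, where the test box overshoots the cell and the contribution is the full width $\frac{1}{m}$) versus \emph{partial} (those $j \notin u$, where the contribution is $x_j - \frac{i_j-1}{m}$). The factorisation of volumes of axis-aligned boxes does all the work, so the proof is a direct verification rather than anything requiring a new idea, and I expect it to be as short as the proof of Lemma \ref{lem:qcases2d}.
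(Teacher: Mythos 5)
Your proof is correct and follows essentially the same route as the paper's: both compute $\left| \Omega_{\boldsymbol{\mathit{i}}} \cap [0, \mathbf{x}) \right|$ by factorising the intersection coordinate-wise into full contributions of length $\frac{1}{m}$ for $k \in u$ and partial contributions $x_j - \frac{i_j-1}{m}$ for $j \notin u$, then divide by $\left|\Omega_{\boldsymbol{\mathit{i}}}\right| = m^{-d}$. The only cosmetic difference is that the paper unifies the two cases by observing $\mathcal{I}_{\boldsymbol{\mathit{i}}}^{\emptyset} = \Omega_{\boldsymbol{\mathit{i}}}$, whereas you treat them separately; your version spells out the saturated/partial bookkeeping that the paper leaves implicit.
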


\begin{proof}
	Fix $\boldsymbol{\mathit{i}} = (i_1, \ldots, i_d) \in \mathbb{N}_m^d$. Both cases can be proven simultaneously by noting that $\mathcal{I}_{\boldsymbol{\mathit{i}}}^{\emptyset} = \Omega_{\boldsymbol{\mathit{i}}}$. Suppose that $\mathbf{x} \in \mathcal{I}_{\boldsymbol{\mathit{i}}}^u$ for any $u \subset \{1:d\}$, then by definition $$\frac{i_j-1}{m} \leq x_j \leq \frac{i_j}{m} \text{ for } j \notin u \textnormal{  and } \frac{i_k}{m} \leq x_k \leq 1 \text{ for } k \in u.$$ Now to determine the desired expression for the function $q_{\boldsymbol{\mathit{i}}}(\mathbf{x})$, we are interested in the intersection between the jittered set $\Omega_{\boldsymbol{\mathit{i}}}$ and the test box $[0, \mathbf{x})$. From the observation above, we can conclude that 
	$$\left| \Omega_{\boldsymbol{\mathit{i}}} \cap [0, \mathbf{x}) \right| = \frac{1}{m^{|u|}} \prod_{j \notin u} \left( x_j - \frac{i_j - 1}{m} \right).$$ 
Hence, 
\begin{eqnarray}
	q_{\boldsymbol{\mathit{i}}}(\mathbf{x}) = \frac{\left| \Omega_{\boldsymbol{\mathit{i}}} \cap [0, \mathbf{x}) \right|}{\left| \Omega_{\boldsymbol{\mathit{i}}} \right|}  = \frac{ \frac{1}{m^{|u|}} \prod_{j \notin u} \left( x_j - \frac{i_j - 1}{m} \right) }{1/m^{d}}  = m^{d-|u|} \prod_{j \notin u} \left( x_j - \frac{i_j - 1}{m} \right) \nonumber
\end{eqnarray}
	
	\noindent
	as required. For completeness, when $u=\emptyset$ (i.e. $I_{\boldsymbol{\mathit{i}}}^\emptyset = \Omega_{\boldsymbol{\mathit{i}}}$) the expression becomes $$q_{\boldsymbol{\mathit{i}}}(\mathbf{x}) = m^d \prod_{j=1}^{d} \left( x_j - \frac{i_j-1}{m} \right)$$ as required.
\end{proof}

\begin{lemma}\label{lem:integraloverI}
Let $\boldsymbol{\mathit{i}} \in \mathbb{N}_m^d$ and let $u$ be a nonempty, strict subset of $\{1:d\}$. Then
	\begin{equation*}
		\int_{I^u_{\boldsymbol{\mathit{i}}}} q_{\boldsymbol{\mathit{i}}}(\mathbf{x}) \left( 1 - q_{\boldsymbol{\mathit{i}}}(\mathbf{x}) \right) d\mathbf{x} = \left( \frac{3^{d-|u|} - 2^{d-|u|}}{(6m)^{d-|u|}} \right) \prod_{j \in u} \left( 1-\frac{i_j}{m} \right).
	\end{equation*}

\noindent
Moreover,

	\begin{equation*}
			\int_{\Omega_{\boldsymbol{\mathit{i}}}} q_{\boldsymbol{\mathit{i}}}(\mathbf{x}) \left( 1 - q_{\boldsymbol{\mathit{i}}}(\mathbf{x}) \right) d\mathbf{x} =  \frac{3^{d} - 2^{d}}{(6m)^{d}}.
	\end{equation*}

\end{lemma}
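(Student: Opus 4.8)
The plan is to reduce both displayed identities to a single elementary product integral over a lower-dimensional cube, using the explicit factorised form of $q_{\boldsymbol{\mathit{i}}}$ supplied by Lemma \ref{lem:qcasesddimensions}. I note first that the second assertion is exactly the case $u = \emptyset$ of the first: indeed $\mathcal{I}^{\emptyset}_{\boldsymbol{\mathit{i}}} = \Omega_{\boldsymbol{\mathit{i}}}$, and the product $\prod_{j \in u}(1 - i_j/m)$ is then empty, hence equal to $1$, which collapses the first identity to $(3^d - 2^d)/(6m)^d$. So I would prove the first identity for an arbitrary (possibly empty) strict subset $u \subset \{1:d\}$ and read off the second as the special case.

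For the first identity, the crucial observation is that on $\mathcal{I}^u_{\boldsymbol{\mathit{i}}}$ the function $q_{\boldsymbol{\mathit{i}}}(\mathbf{x}) = m^{d-|u|}\prod_{j \notin u}(x_j - \frac{i_j-1}{m})$ does not depend on the coordinates $x_k$ with $k \in u$. Since $\mathcal{I}^u_{\boldsymbol{\mathit{i}}}$ is an axis-aligned box, I would apply Fubini and integrate out the $|u|$ coordinates indexed by $u$ first; each such coordinate $x_k$ ranges over $[i_k/m, 1]$ and, the integrand being constant in it, contributes a factor $1 - i_k/m$. This produces the announced boundary product $\prod_{j \in u}(1 - i_j/m)$, and what remains is the integral of $q_{\boldsymbol{\mathit{i}}}(1 - q_{\boldsymbol{\mathit{i}}})$ over the residual box $\prod_{j \notin u}[\frac{i_j-1}{m}, \frac{i_j}{m}]$.

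On this residual box I would substitute $y_j = m(x_j - \frac{i_j-1}{m}) \in [0,1]$ for each $j \notin u$, which gives $q_{\boldsymbol{\mathit{i}}} = \prod_{j \notin u} y_j$ and a Jacobian factor $m^{-(d-|u|)}$. Writing $r = d - |u|$, the remaining task is the symmetric computation $\int_{[0,1]^r}\bigl(\prod_{j} y_j\bigr)\bigl(1 - \prod_j y_j\bigr)\,d\mathbf{y}$, which splits by Fubini into $\prod_j \int_0^1 y_j\,dy_j - \prod_j \int_0^1 y_j^2\,dy_j = 2^{-r} - 3^{-r}$. Multiplying by the Jacobian $m^{-r}$ and rewriting $m^{-r}(2^{-r} - 3^{-r}) = (3^r - 2^r)/(6m)^r$ yields the claimed constant. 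No step here presents a genuine obstacle; the only point requiring care is the bookkeeping of which coordinates are integrated over which range, so that the separation into the \emph{constant} factor $(3^r-2^r)/(6m)^r$ coming from the coordinates $j \notin u$ and the \emph{boundary} factor $\prod_{j\in u}(1 - i_j/m)$ coming from the coordinates $k \in u$ comes out correctly.
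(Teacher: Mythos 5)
Your proposal is correct and follows essentially the same route as the paper's proof: both rely on the factorised form of $q_{\boldsymbol{\mathit{i}}}$ from Lemma \ref{lem:qcasesddimensions}, split $q(1-q)=q-q^2$, and evaluate the resulting separable iterated integrals via Fubini, with each coordinate in $u$ contributing the factor $\left(1-\frac{i_j}{m}\right)$ and each coordinate outside $u$ contributing the elementary moments that combine to $\frac{3^{d-|u|}-2^{d-|u|}}{(6m)^{d-|u|}}$. Your normalising substitution $y_j = m\left(x_j - \frac{i_j-1}{m}\right)$ and the unified treatment of the second claim as the case $u=\emptyset$ are only cosmetic streamlinings of the paper's direct computation (the paper itself makes the observation $\mathcal{I}^{\emptyset}_{\boldsymbol{\mathit{i}}} = \Omega_{\boldsymbol{\mathit{i}}}$ in the proof of Lemma \ref{lem:qcasesddimensions}).
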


\begin{proof}
For a given $\boldsymbol{\mathit{i}} \in \mathbb{N}_m^d$, consider a type$-|u|$ subset of $[0,1]^d$. We can suppose that $u = \{1, 2, \ldots, |u|\} \subset \{1:d\}$ without loss of generality. Then using the general form of $q_{\boldsymbol{\mathit{i}}}(\mathbf{x})$, the second integral in \eqref{eq:simplifiedddimensions} has the form
	
	\begin{align}
		\int_{I^u_{\boldsymbol{\mathit{i}}}} q_{\boldsymbol{\mathit{i}}}(\mathbf{x}) (1 - q_{\boldsymbol{\mathit{i}}}(\mathbf{x})) d\mathbf{x} &= \int_{I^u_{\boldsymbol{\mathit{i}}}} q_{\boldsymbol{\mathit{i}}}(\mathbf{x}) - q^2_{\boldsymbol{\mathit{i}}}(\mathbf{x}) d\mathbf{x} \nonumber \\&= m^{d-|u|} \int_{I^u_{\boldsymbol{\mathit{i}}}} \prod_{j \notin u} \left( x_j - \frac{i_j-1}{m} \right) d\mathbf{x} - m^{2(d-|u|)} \int_{I^u_{\boldsymbol{\mathit{i}}}} \prod_{j \notin u} \left( x_j - \frac{i_j-1}{m} \right)^2 d\mathbf{x} \nonumber \\ &= m^{d-|u|} \int_{\frac{i_1}{m}}^1 \cdots \int_{\frac{i_{|u|}}{m}}^1 \prod_{j=|u|+1}^{d} \int_{\frac{i_j - 1}{m}}^{\frac{i_j}{m}} \left( x_j - \frac{i_j-1}{m} \right) dx_j dx_{|u|} \ldots x_1 \nonumber \\ & \hspace{1.3cm} - m^{2(d-|u|)} \int_{\frac{i_1}{m}}^1 \cdots \int_{\frac{i_{|u|}}{m}}^1 \prod_{j=|u|+1}^{d} \int_{\frac{i_j - 1}{m}}^{\frac{i_j}{m}} \left( x_j - \frac{i_j-1}{m} \right)^2 dx_j dx_{|u|} \ldots x_1 \nonumber \\ &= m^{d-|u|} \left( \frac{1}{2m^2} \right)^{d-|u|} \left( 1 - \frac{i_{|u|}}{m} \right) \cdots \left( 1 - \frac{i_1}{m} \right) \nonumber \\ & \hspace{1.3cm} - m^{2(d-|u|)} \left( \frac{1}{3m^3} \right)^{d-|u|} \left( 1 - \frac{i_{|u|}}{m} \right) \cdots \left( 1 - \frac{i_1}{m} \right) \nonumber \\ &= \frac{1}{(2m)^{d-|u|}} \prod_{j \in u} \left(1 - \frac{i_j}{m} \right) - \frac{1}{(3m)^{d-|u|}} \prod_{j \in u} \left(1 - \frac{i_j}{m} \right) \nonumber \\ &= \left( \frac{1}{(2m)^{d-|u|}} - \frac{1}{(3m)^{d-|u|}} \right) \prod_{j \in u} \left(1 - \frac{i_j}{m} \right) \nonumber \\ &= \left( \frac{3^{d-|u|} - 2^{d-|u|} }{(6m)^{d-|u|}} \right) \prod_{j \in u} \left(1 - \frac{i_j}{m} \right) \nonumber
	\end{align}
	
	\noindent
as required. 
To show the second statement in the lemma we use the formula of Lemma \ref{lem:qcasesddimensions} for $q_{\boldsymbol{\mathit{i}}}(\mathbf{x})$ and $\mathbf{x} \in \Omega_{\boldsymbol{\mathit{i}}}$. We notice that the calculation is similar to the above resulting in
\begin{equation*}
		\int_{\Omega_{\boldsymbol{\mathit{i}}}} q_{\boldsymbol{\mathit{i}}}(\mathbf{x}) (1 - q_{\boldsymbol{\mathit{i}}}(\mathbf{x})) d\mathbf{x} = \frac{3^{d} - 2^{d} }{(6m)^{d}}.
\end{equation*}
	
\end{proof}

\section{Proof of Theorem \ref{thm:discforddimensions} }

	Starting from \eqref{eq:simplifiedddimensions}, we use Lemma \ref{lem:integraloverI} to rewrite 
	$$	\mathbb{E}\mathcal{L}_2^2(\mathcal{P}_{\boldsymbol{\Omega}}) = \frac{1}{m^{2d}} \sum_{\boldsymbol{\mathit{i}} \in \mathbb{N}_m^d} \left[ \int_{\Omega_{\boldsymbol{\mathit{i}}}} q_{\boldsymbol{\mathit{i}}} (\mathbf{x}) \left( 1 - q_{\boldsymbol{\mathit{i}}}(\mathbf{x}) \right) d\mathbf{x} + \sum_{ \emptyset \neq u \subset \{1:d\}} \int_{\mathcal{I}^u_{\boldsymbol{\mathit{i}}}} q_{\boldsymbol{\mathit{i}}} (\mathbf{x}) \left( 1 - q_{\boldsymbol{\mathit{i}}}(\mathbf{x}) \right) d\mathbf{x} \right]$$
	as
	$$\mathbb{E}\mathcal{L}_2^2(\mathcal{P}_{\boldsymbol{\Omega}}) = \frac{1}{m^{2d}} \sum_{\boldsymbol{\mathit{i}} \in \mathbb{N}_m^d} \left[ \frac{3^d - 2^d}{(6m)^d} + \sum_{ \emptyset \neq u \subset \{1:d\}} \left( \frac{3^{d-|u|} - 2^{d-|u|} }{(6m)^{d-|u|}} \right) \prod_{j \in u} \left( 1 - \frac{i_j}{m} \right) \right].$$
	
	We gather the terms in the last line by including the case $u=\emptyset$ in the summation over subsets of $\{1:d\}$.
	
	\begin{eqnarray}\label{eq:doublesum}	\mathbb{E}\mathcal{L}_2^2(\mathcal{P}_{\boldsymbol{\Omega}})&=& \frac{1}{m^{2d}} \sum_{\boldsymbol{\mathit{i}} \in \mathbb{N}_m^d} \left[ \sum_{u \subset \{1:d\}} \left( \frac{3^{d-|u|} - 2^{d-|u|} }{(6m)^{d-|u|}} \right) \prod_{j \in u} \left( 1 - \frac{i_j}{m} \right) \right] \nonumber \\ &=& \frac{1}{m^{2d}} \sum_{\boldsymbol{\mathit{i}} \in \mathbb{N}_m^d} \sum_{k=0}^d \sum_{\substack{u \subset \{1:d\} \\ |u|=k}} \frac{3^{d-k} - 2^{d-k} }{(6m)^{d-k}} \prod_{j \in u} \left(1-\frac{i_j}{m} \right) \nonumber \\ &=& \frac{1}{m^{2d}} \sum_{k=0}^d \frac{3^{d-k} - 2^{d-k} }{(6m)^{d-k}} \sum_{\substack{u \subset \{1:d\} \\ |u|=k}} \underbrace{\sum_{\boldsymbol{\mathit{i}} \in \mathbb{N}_m^d} \prod_{j \in u} \left( 1 - \frac{i_j}{m} \right) }_{(\ast)}  
\end{eqnarray}
	
	The quantity $(\ast)$ is equal to the same value for all subsets $u$ with $|u|=k>0$. Hence we calculate the double sum in \eqref{eq:doublesum} by letting $u=\{1, 2, \ldots, k\}$, setting $j_l = m - i_l$ for $1 \leq l \leq k$ and noting that there are $\binom{d}{k}$ subsets of $\{1:d\}$ of size $k$. Thus, 
	
	\begin{eqnarray}
		\sum_{\substack{u \subset \{1:d\} \\ |u|=k}} \sum_{\boldsymbol{\mathit{i}} \in \mathbb{N}_m^d} \prod_{j \in u} \left( 1 - \frac{i_j}{m} \right) &=&\sum_{i_{k+1}, \ldots, i_d =1}^m  \binom{d}{k} \sum_{i_1, \ldots, i_k =1}^m \prod_{j=1}^k \left(1 - \frac{i_j}{m} \right) \nonumber \\ &=& m^{d-k} \binom{d}{k} \sum_{j_1, \ldots, j_k =0}^{m-1} \left( \frac{j_1 j_2 \cdots j_k}{m^k} \right) \nonumber \\ &=& m^{d-k} \binom{d}{k} \cdot m^{-k} \left( \frac{m(m-1)}{2} \right)^k \nonumber \\ &=& m^{d-k} \binom{d}{k} \left( \frac{m-1}{2} \right)^k \nonumber
	\end{eqnarray}
	
	Incorporating the last derivation into \eqref{eq:doublesum}, we obtain
	
	\begin{eqnarray}
		\mathbb{E}\mathcal{L}_2^2(\mathcal{P}_{\boldsymbol{\Omega}}) &=& \frac{1}{m^{2d}}  \sum_{k=0}^d \binom{d}{k} \left[ \left( \frac{1}{2} \right)^{d-k} - \left( \frac{1}{3} \right)^{d-k} \right] \left(\frac{m-1}{2} \right)^k \nonumber \\ &=& \frac{1}{m^{2d}} \left[ \left( \frac{m-1}{2}+\frac{1}{2} \right)^d - \left( \frac{m-1}{2}+\frac{1}{3} \right)^d \right] \nonumber
	\end{eqnarray}
	 \noindent
	as required.
	

\section{Proof of Theorem \ref{thm:hickernelldisc}}
For the Hickernell discrepancy we are required to calculate the discrepancy of all projections of the point set in addition to that of the original set. Therefore, we first derive a formula for the expected $\mathcal{L}_2-$discrepancy of a $d-$dimensional jittered point set projected onto a lower dimensional face of the unit cube $[0,1]^d$.

\begin{proposition}\label{thm:projecteddiscrepancy}
	Let $\boldsymbol{\Omega} = \{\Omega_{\boldsymbol{i}} : \boldsymbol{i} \in \mathbb{N}_m^d \}$ be a jittered partition of $[0,1]^d$ for $m \geq 2$. For a nonempty subset $s \subseteq \{1:d\}$, we have
		\begin{equation}
		\mathbb{E}\mathcal{L}_2^2(\mathcal{P}_{\boldsymbol{\Omega}}^s) = \frac{1}{m^{d+|s|}} \left[ \left( \frac{m-1}{2} + \frac{1}{2} \right)^{|s|} - \left(  \frac{m-1}{2} + \frac{1}{3} \right)^{|s|} \right].
		\end{equation}
\end{proposition}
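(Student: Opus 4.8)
The plan is to reduce the computation to the $|s|$-dimensional instance of Theorem \ref{thm:discforddimensions}. First I would fix, without loss of generality, $s = \{1, \ldots, |s|\}$ and describe the projected point set precisely. Since each $\mathbf{X}_{\boldsymbol{i}}$ is uniform on the axis-aligned box $\Omega_{\boldsymbol{i}}$, its coordinates are independent and its projection $\mathbf{X}_{\boldsymbol{i}}^s$ is uniform on the projected box $\Omega_{\boldsymbol{i}}^s$, which depends only on the coordinates $(i_l)_{l \in s}$. Consequently the $N = m^d$ projected points fall into the $m^{|s|}$ cells of the projected partition $\boldsymbol{\Omega}^s$, with exactly $m^{d-|s|}$ mutually independent uniform points landing in each cell. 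Note that this is \emph{not} a jittered sample on $[0,1]^s$, so equation \eqref{eq:originalequation} cannot be invoked verbatim; the first, and conceptually most important, step is therefore to re-establish the variance formula in this many-points-per-cell setting.

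To do so I would write the discrepancy function of $\mathcal{P}_{\boldsymbol{\Omega}}^s$ as $\Delta(\mathbf{y}) = \frac{1}{N}\sum_{\boldsymbol{i}} \mathbbm{1}[\mathbf{X}_{\boldsymbol{i}}^s \in [0, \mathbf{y})] - |[0, \mathbf{y})|$ for $\mathbf{y} \in [0,1]^s$. Writing $\tilde{q}_{\boldsymbol{i}}(\mathbf{y}) = |\Omega_{\boldsymbol{i}}^s \cap [0, \mathbf{y})| / |\Omega_{\boldsymbol{i}}^s|$ for the success probability of the $\boldsymbol{i}$-th Bernoulli indicator, the crucial two facts are: (i) the estimator is unbiased, because the projected cells tile $[0,1]^s$ with equal volume $m^{-|s|}$, so $\frac{1}{N}\sum_{\boldsymbol{i}} \tilde{q}_{\boldsymbol{i}}(\mathbf{y})$ collapses to $|[0,\mathbf{y})|$; and (ii) since every one of the $m^d$ original points is sampled independently, all projected indicators are mutually independent, so the variance is additive. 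Taking expectations and integrating then yields
\begin{equation*}
\mathbb{E}\mathcal{L}_2^2(\mathcal{P}_{\boldsymbol{\Omega}}^s) = \frac{1}{N^2} \sum_{\boldsymbol{i} \in \mathbb{N}_m^d} \int_{[0,1]^s} \tilde{q}_{\boldsymbol{i}}(\mathbf{y})\left(1 - \tilde{q}_{\boldsymbol{i}}(\mathbf{y})\right) d\mathbf{y},
\end{equation*}
the exact analogue of \eqref{eq:bequationddimension} for the projection.

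The final step exploits the fact that $\tilde{q}_{\boldsymbol{i}}$ depends only on $(i_l)_{l \in s}$. Grouping the $m^d$ summands by their projected index shows that each of the $m^{|s|}$ distinct integrals is repeated $m^{d-|s|}$ times, so the sum equals $m^{d-|s|} \sum_{\boldsymbol{j} \in \mathbb{N}_m^{|s|}} \int_{[0,1]^{|s|}} q_{\boldsymbol{j}}(\mathbf{y})(1 - q_{\boldsymbol{j}}(\mathbf{y})) d\mathbf{y}$, where the inner sum is literally the quantity computed in the proof of Theorem \ref{thm:discforddimensions} (equivalently, assembled from Lemma \ref{lem:integraloverI}) in dimension $|s|$. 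Substituting its value $\left(\frac{m-1}{2}+\frac{1}{2}\right)^{|s|} - \left(\frac{m-1}{2}+\frac{1}{3}\right)^{|s|}$ and simplifying the prefactor $m^{d-|s|}/N^2 = 1/m^{d+|s|}$ gives the claimed formula. I expect the main obstacle to be conceptual rather than computational: one must recognise that although the projected sample is not jittered, the unbiasedness and full independence underlying \eqref{eq:originalequation} persist, after which the per-cell integrals coincide exactly with the genuinely $|s|$-dimensional jittered computation and only the multiplicity $m^{d-|s|}$ needs to be tracked.
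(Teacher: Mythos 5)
Your proposal is correct and follows essentially the same route as the paper's proof: unbiasedness of the counting estimator, variance additivity from the mutual independence of the $m^d$ points, and reduction of the per-cell Bernoulli integrals to the $|s|$-dimensional jittered computation with multiplicity $m^{d-|s|}$, yielding the prefactor $m^{d-|s|}/m^{2d} = 1/m^{d+|s|}$. The only cosmetic differences are that you verify unbiasedness directly from the equivolume tiling of $[0,1]^s$ where the paper cites Proposition 1 of \cite{MKFP21}, and that you sum over all $m^d$ original indices before grouping, whereas the paper sums over the $m^{|s|}$ projected cells each carrying $m^{d-|s|}$ points.
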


\begin{proof}
	Suppose $\mathcal{P}_{\boldsymbol{\Omega}}$ is a jittered sample contained in $[0,1]^d$ with $N=m^d$ points. We follow the notation and general method as presented in \cite{MKFP21} and consider the random variable $$Z_{\mathbf{x}^s}(\mathcal{P}_{\boldsymbol{\Omega}}^s) = \frac{\#(\mathcal{P}^s_{\boldsymbol{\Omega}} \cap [0, \mathbf{x}^s))}{N}$$ where $\mathcal{P}^s_{\boldsymbol{\Omega}}$ denotes the projected jittered point set into the unit cube $[0,1]^s$. It is easy to observe that for some nonempty $s \subseteq \{1:d\}$, the projection of a $d-$dimensional jittered partition onto $[0,1]^s$ is again a jittered partition (but with more than one point in each set) and hence equivolume. By Proposition 1 from \cite{MKFP21}, we can conclude that 
	\begin{equation}\label{eq:unbiased}
	\mathbb{E}(Z_{\mathbf{x}^s}(\mathcal{P}_{\boldsymbol{\Omega}}^s)) = |[0, \mathbf{x}^s)|.
	\end{equation}
	From this unbiasedness of the variable $Z_{\mathbf{x}^s} = Z_{\mathbf{x}^s}(\mathcal{P}_{\boldsymbol{\Omega}}^s)$ and applying Tonelli's theorem with $p=2$, we have  
	\begin{eqnarray} \label{eq:variance}
	\mathbb{E}\mathcal{L}_2^2(\mathcal{P}_{\boldsymbol{\Omega}}^s) &=& \int_{[0,1]^s} \mathbb{E} \left| Z_{\mathbf{x}^s} - |[0, \mathbf{x}^s)| \right|^2 d\mathbf{x}^s \nonumber \\ &=& \int_{[0,1]^s} \mathbb{E} \left|Z_{\mathbf{x}^s} - \mathbb{E}Z_{\mathbf{x}^s} \right|^2 d\mathbf{x}^s \nonumber \\ &=& \int_{[0,1]^s} \Var(Z_{\mathbf{x}^s}) d\mathbf{x}^s
	\end{eqnarray}
	with the last equality due to the expression $\mathbb{E}|Z_{\mathbf{x}^s} - \mathbb{E}Z_{\mathbf{x}^s}|^2$ which denotes the second central moment of $Z_{\mathbf{x}^s}$, i.e. the variance.
	
	Now we take note that for a given projection of $\mathcal{P}_{\boldsymbol{\Omega}}$ into dimension $d-1$, the number of jittered sets is reduced by a factor of $m$ with each set containing $m$ points. Continuing in a similar manner, when projecting onto a dimension $1 \leq |s| < d$, there are $\frac{N}{m^{d-|s|}}$ jittered sets each containing $m^{d-|s|}$ points. Therefore, the random variable $N \cdot Z_{\mathbf{x}^s} = \#(\mathcal{P}_{\boldsymbol{\Omega}}^s \cap [0, \mathbf{x}^s) )$ can be treated as the sum of $\frac{N}{m^{d-|s|}}$ groups of $m^{d-|s|}$ identical Bernoulli variables with success probabilities $\tilde{q}_{\boldsymbol{i}}(\mathbf{x}^s)$ for $\boldsymbol{i} \in \mathbb{N}_m^s$ where $$\tilde{q}_{\boldsymbol{i}}(\mathbf{x}^s) = \frac{|\Omega_{\boldsymbol{i}}^s \cap [0, \mathbf{x}^s)|}{|\Omega_{\boldsymbol{i}}^s|}.$$
	
	\noindent
	Therefore $$N^2 \Var(Z_{\mathbf{x}^s}(\mathcal{P}_{\boldsymbol{\Omega}}^s)) = \Var(N \cdot Z_{\mathbf{x}^s}(\mathcal{P}_{\boldsymbol{\Omega}}^s)) = \sum_{\boldsymbol{i} \in \mathbb{N}_m^s} m^{d-|s|} \tilde{q}_{\boldsymbol{i}}(\mathbf{x}^s) (1 - \tilde{q}_{\boldsymbol{i}}(\mathbf{x}^s)).$$
	Entering the last into \eqref{eq:variance} with $N=m^d$ yields
	\begin{equation}\label{eq:projectedintegral}
	\mathbb{E}\mathcal{L}_2^2(\mathcal{P}_{\boldsymbol{\Omega}}^s) = \frac{1}{m^{d+|s|}} \sum_{\boldsymbol{i} \in \mathbb{N}_m^s} \int_{[0,1]^s} \tilde{q}_{\boldsymbol{i}}(\mathbf{x}^s) (1 - \tilde{q}_{\boldsymbol{i}}(\mathbf{x}^s)) d\mathbf{x}^s.
	\end{equation}
	
	For a given projection onto $[0,1]^s$ with $\emptyset \neq s \subseteq \{1:d\}$, the probabilities $\tilde{q}_{\boldsymbol{i}}(\mathbf{x}^s)$ have the same form as stated in Lemma 9 in the new appropriate dimension $|s|$. Hence \eqref{eq:projectedintegral} can be simplified in a similar manner to the standard $\mathcal{L}_2-$discrepancy calculation and can be written as in the statement of the proposition, i.e. 
	
	\begin{equation}
		\mathbb{E}\mathcal{L}_2^2(\mathcal{P}_{\boldsymbol{\Omega}}^s) = \frac{1}{m^{d+|s|}} \left[ \left( \frac{m-1}{2} + \frac{1}{2} \right)^{|s|} - \left(  \frac{m-1}{2} + \frac{1}{3} \right)^{|s|} \right].
	\end{equation}
\end{proof}

Proposition \ref{thm:projecteddiscrepancy} is our main tool in the proof of Theorem \ref{thm:hickernelldisc}.

\begin{proof}[Proof of Theorem \ref{thm:hickernelldisc}]
	Let $\mathcal{P}_{\boldsymbol{\Omega}}$ be a jittered point set contained in $[0,1]^d$. We have that
	\begin{equation*}
		\mathbb{E}D_{H,2}^2(\mathcal{P}_{\boldsymbol{\Omega}}) := \sum_{\emptyset \neq s \subseteq \{1:d\}} \mathbb{E}\mathcal{L}_2^2(\mathcal{P}^s_{\boldsymbol{\Omega}}).
	\end{equation*}
	From Proposition \ref{thm:projecteddiscrepancy} we obtain,
	\begin{eqnarray}
		\mathbb{E}D_{H,2}^2(\mathcal{P}_{\boldsymbol{\Omega}}) &=& \sum_{\emptyset \neq s \subseteq \{1:d\}} \frac{1}{m^{d+|s|}} \left[ \left( \frac{m-1}{2} + \frac{1}{2} \right)^{|s|} - \left(  \frac{m-1}{2} + \frac{1}{3} \right)^{|s|} \right] \nonumber \\ &=& \sum_{j=1}^d \frac{1}{m^{d+j}} \binom{d}{j} \left[ \left( \frac{m-1}{2} + \frac{1}{2} \right)^j - \left(  \frac{m-1}{2} + \frac{1}{3} \right)^j \right]. \nonumber
	\end{eqnarray}
	where the last equality is due to the fact that we have the uniform distribution inside each original jittered set, hence the projection onto any $|s|-$dimensional cube will have the same distribution. The binomial coefficient takes care of the fact that we have $\binom{d}{j}$ many projections for a given dimension $1 \leq j \leq d$.
\end{proof}

\section*{Acknowledgements}
The authors would like to thank Markus Kiderlen for fruitful discussions and help with the simplification of \eqref{eq:doublesum}.


\end{document}